\def\squarebox#1{\hbox to #1{\hfill\vbox to #1{\vfill}}}
\theoremstyle{plain}
\newtheorem{Thm}{Theorem}
\newtheorem{lem}{Lemma}
\newcommand{\R}{\mathbb{R}}
\def\epsilon{\varepsilon}
\def\phi {\varphi}
\newtheorem{rem}{Remark}
\providecommand{\abs}[1]{\left\lvert#1\right\rvert}
\providecommand{\norm}[1]{\left\lVert#1\right\rVert}
\numberwithin{equation}{section}
\renewcommand{\d}{\textrm{d}}
\renewcommand{\leq}{\leqslant}
\renewcommand{\geq}{\geqslant}
\providecommand{\abs}[1]{\left\lvert#1\right\rvert}
\providecommand{\norm}[1]{\left\lVert#1\right\rVert}
\begin{document}

\title{Stability of the determination of a time-dependent coefficient in parabolic equations}
\author[Mourad Choulli and Yavar Kian]{ Mourad Choulli and Yavar Kian}
\address {Mourad Choulli,
LMAM, UMR 7122, Universit\'e de Lorraine,
Ile du Saulcy, 57045 Metz cedex 1, France}
\email{\url{mourad.choulli@univ-lorraine.fr}}
\address { Yavar Kian,
UMR-7332, Aix Marseille Universit\'e, Centre de Physique Théorique, Campus de Luminy, Case 907
13288 Marseille cedex 9, France}
\email{\url{yavar.kian@univ-amu.fr}}

\maketitle
\begin{abstract} We establish a Lipschitz stability estimate  for the inverse problem consisting in the determination of the coefficient $\sigma(t)$,  appearing in a Dirichlet initial-boundary value problem for the parabolic equation $\partial_tu-\Delta_x u+\sigma(t)f(x)u=0$, from Neumann boundary data.   We extend this result  to the same inverse problem when the previous linear parabolic equation  in changed to the semi-linear parabolic equation $\partial_tu-\Delta_x u=F(t,x,\sigma(t),u(x,t))$.

\medskip
\noindent
{\bf Key words :} parabolic equation, semi-linear parabolic equation, inverse problem, determination of time-depend coefficient, stability estimate.

\medskip
\noindent
{\bf AMS subject classifications :} 35R30.
\end{abstract}

\tableofcontents

\section{Introduction}
Throughout this paper, we assume that $\Omega$ is  a $\mathcal C^3$    bounded domain of  $\R^n$ with $n\geq2$. Let $T>0$ and set \[ Q=\Omega\times (0,T),\quad \Gamma=\partial\Omega,\quad \Sigma=\Gamma\times (0,T).\]
We consider the following initial-boundary
value problem
\begin{equation}\label{eq1} 
\left\{\begin{aligned}
&\partial_tu-\Delta_x u+\sigma(t)f(x)u=0,\quad &(x,t)\in Q,\\  &u(x,0)=h(x), &x\in\Omega,\\ &u(x,t)=g(x,t),&\quad (x,t)\in\Sigma.\end{aligned}\right.\end{equation}

We introduce the following assumptions :
\begin{enumerate}
\item[$\rm(H1)$]$f\in \mathcal C^2(\overline{\Omega})$, $h\in \mathcal C^{2,\alpha}(\overline{\Omega})$, $g\in\mathcal C^{2+\alpha,1+\frac{\alpha}{2}}(\overline{\Sigma})$, for some $0<\alpha<1$, and satisfy the compatibility condition
\[\partial_tg(x,0)-\Delta_xh(x)+\sigma(0)f(x)h(x)=0,\quad x\in\Gamma.\]
\item[$\rm(H2)$] There exists $x_0\in\Gamma$ such that
\[\inf_{t\in[0,T]}|g(x_0,t)f(x_0)|>0.\]
\end{enumerate} 

Under assumption (H1), It is well known that, for $\sigma\in\mathcal C^1[0,T]$,  the initial-boundary value problem \eqref{eq1} admits a unique solution $u=u(\sigma )\in\mathcal C^{2+\alpha,1+\frac{\alpha}{2}}(\overline{Q})$ (see Theorem 5.2 of \cite{LSU}). Moreover, given $M>0$, there exists a constant $C>0$ depending only on data (that is $\Omega$, $T$, $f$, $g$ and $h$) such that $\|\sigma\|_{W^{1,\infty}(0,T)}\leq M$ implies
\begin{equation}\label{0}
\|u(\sigma )\|_{C^{2+\alpha ,1+\alpha /2}(\overline{Q})}\leq C.
\end{equation}

\smallskip
In the present paper we are concerned with the inverse problem consisting in
the determination of  the time dependent coefficient $\sigma(t)$  from Neumann boundary data $\partial_\nu u(\sigma )$ on $\Sigma$, where $\partial_\nu$ is the derivative in the direction of the unit outward
normal vector to $\Gamma$.
 
 \smallskip
We prove the following theorem, where $B(M)$ is the ball of $C^1[0,T]$ centered at $0$ and with radius $M>0$.
 
\begin{Thm}\label{t1} 
Assume that $\rm(H1)$ and $\rm(H2)$ are fulfilled. For $i=1,2$, let $\sigma_i\in B(M)$ and $u_i=u(\sigma_i)$.  Then there exists a constant $C>0$, depending only on data, such that
\begin{equation}\label{th1} 
\norm{\sigma_2-\sigma_1}_{L^\infty(0,T)}\leq C\norm{\partial_t\partial_\nu u_2-\partial_t\partial_\nu u_1}_{L^\infty(\Sigma)}.
\end{equation}
\end{Thm}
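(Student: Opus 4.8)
The plan is to reduce the estimate to a one-dimensional (in time) integral equation localized at the point $x_0$ furnished by (H2). Write $w=u_2-u_1$ and $\mu=\sigma_2-\sigma_1$. Subtracting the two copies of \eqref{eq1} and adding and subtracting $\sigma_2 f u_1$ gives
\begin{equation*}
\partial_t w-\Delta_x w+\sigma_2(t)f\,w=-\mu(t)f\,u_1\quad\text{in }Q,\qquad w|_{t=0}=0,\qquad w|_{\Sigma}=0 ,
\end{equation*}
the homogeneous initial and lateral conditions coming from the fact that $u_1,u_2$ share the same data $h,g$. Two consequences will be used repeatedly. First, since $w(\cdot,0)\equiv0$ we also have $\partial_\nu w(\cdot,0)=0$ on $\Gamma$, so that $\partial_\nu w(x,t)=\int_0^t\partial_t\partial_\nu w(x,s)\,ds$ and hence $\norm{\partial_\nu w}_{L^\infty(\Sigma)}\leq T\,\norm{\partial_t\partial_\nu w}_{L^\infty(\Sigma)}$. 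Second, the zeroth-order compatibility $g(\cdot,0)=h$ on $\Gamma$ (implicit in the well-posedness in $C^{2+\alpha,1+\alpha/2}$) together with the compatibility condition in (H1) written for $\sigma_1$ and $\sigma_2$ forces $\mu(0)f(x)h(x)=0$ on $\Gamma$; evaluating at $x_0$, where $|f(x_0)h(x_0)|=|f(x_0)g(x_0,0)|>0$ by (H2), gives $\mu(0)=0$. The uniform a priori bound \eqref{0} will guarantee that every constant produced below depends only on the data and on $M$.

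Next I would record the boundary identity that explains the role of $x_0$. On $\Sigma$ both $w$ and $\partial_t w$ vanish, so the equation reduces there to $\Delta_x w=\mu\,f\,u_1=\mu\,f\,g$; evaluating at $x_0$ yields the exact pointwise relation $\mu(t)=\Delta_x w(x_0,t)/\big(f(x_0)g(x_0,t)\big)$, the denominator being bounded away from $0$ by (H2). This shows that $\mu$ is completely determined by boundary information, but since $\Delta_x w(x_0,\cdot)$ is a second-order quantity that is not measured, I would not use this identity directly: it only serves to motivate the observation and the hypothesis (H2).

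To bring in the measured quantity I would represent $w$ by Duhamel's formula with the evolution family $S(t,s)$ of the operator $\partial_t-\Delta_x+\sigma_2(t)f$ under homogeneous Dirichlet conditions,
\begin{equation*}
w(\cdot,t)=-\int_0^t S(t,s)\big[\mu(s)f\,u_1(\cdot,s)\big]\,ds ,
\end{equation*}
and take the normal derivative at $x_0$. Because the source $f\,u_1$ does not vanish on $\Gamma$, the parabolic boundary layer makes the kernel $k(t,s):=\big[\partial_\nu S(t,s)(f u_1(\cdot,s))\big](x_0)$ behave, I expect, like $c\,f(x_0)g(x_0,t)\,(t-s)^{-1/2}$ as $s\to t^-$, with a remainder of strictly lower order. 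Thus $-\partial_\nu w(x_0,t)=\int_0^t k(t,s)\mu(s)\,ds$ is an Abel-type Volterra equation of the first kind whose principal part is coercive thanks to (H2). Differentiating once in $t$, equivalently inverting the half-order fractional integral using $\partial_t\partial_\nu w(x_0,\cdot)$ and the fact that $\partial_\nu w(x_0,0)=0$, turns it into a Volterra equation of the second kind for $\mu$ with leading coefficient proportional to $f(x_0)g(x_0,t)$. Solving it by Gronwall's inequality, with all kernels and coefficients controlled uniformly through \eqref{0}, gives $\norm{\mu}_{L^\infty(0,T)}\leq C\,\norm{\partial_t\partial_\nu w(x_0,\cdot)}_{L^\infty(0,T)}\leq C\,\norm{\partial_t\partial_\nu u_2-\partial_t\partial_\nu u_1}_{L^\infty(\Sigma)}$, which is \eqref{th1}.

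The main obstacle is exactly this inversion step: the passage from $\mu$ to the measured Neumann data is smoothing of order one half in time (the first-kind Abel structure), so a pointwise argument at $x_0$ cannot recover $\mu$ from $\partial_\nu w$ alone, which is why a full time derivative of the Neumann trace appears on the right-hand side. Making the inversion rigorous requires a precise description of the boundary-layer asymptotics of the normal derivative of the trace of $S(t,s)$ (the $(t-s)^{-1/2}$ principal part together with an integrable remainder) and uniform bounds on these kernels over the ball $B(M)$; this is where the regularity of $\partial\Omega$, the smoothness assumptions in (H1), and the a priori estimate \eqref{0} are essential.
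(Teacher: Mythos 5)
Your reduction to the difference problem, the observations $\partial_\nu w(\cdot,0)=0$ and $\mu(0)=0$, and the identification of the half-order smoothing as the reason the data must be $\partial_t\partial_\nu u$ rather than $\partial_\nu u$ are all correct, and your route is genuinely different from the paper's: you represent $w$ with the \emph{Dirichlet} evolution family, take the normal derivative of the Duhamel formula at $x_0$, and try to invert the resulting first-kind Abel equation, whereas the paper writes the Green identity with the \emph{Neumann} fundamental solution of It\^o, restricts it to the boundary where $u$ vanishes, and differentiates in $t$; there the diagonal term of the differentiated volume potential produces $\sigma(t)f(x_0)g(x_0,t)$ directly (Lemma \ref{l1}), and the differentiated single-layer potential of $\partial_\nu u$ is bounded by $\norm{\partial_t\partial_\nu u}_{L^\infty(\Sigma)}$ (Lemma \ref{l2}), after which Gronwall closes the argument with no Abel inversion at all.

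The gap in your version is the inversion step itself, and the one concrete claim you make about it is wrong as stated: if $k(t,s)=a(s)(t-s)^{-1/2}+r(t,s)$, then ``differentiating once in $t$'' the equation $-\partial_\nu w(x_0,t)=\int_0^t k(t,s)\mu(s)\,\d s$ does \emph{not} produce a second-kind Volterra equation with leading coefficient proportional to $f(x_0)g(x_0,t)$, because the diagonal value $k(t,t)$ is infinite and $\partial_t k$ is non-integrable; the two operations you call ``equivalent'' (a full $t$-derivative versus applying the half-order derivative $D^{1/2}=\frac{\d}{\d t}I^{1/2}$) are not the same. The correct operation is $D^{1/2}$, using $\partial_\nu w(x_0,0)=0$ to write $D^{1/2}\partial_\nu w(x_0,\cdot)=I^{1/2}\partial_t\partial_\nu w(x_0,\cdot)$, which is indeed controlled by $\norm{\partial_t\partial_\nu w}_{L^\infty}$; but to obtain a usable second-kind equation you must then show that $D^{1/2}$ applied to the remainder operator $\mu\mapsto\int_0^t r(t,s)\mu(s)\,\d s$ yields a Volterra operator with an integrable (weakly singular) kernel, uniformly over $B(M)$, and then invoke the singular Gronwall inequality. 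Establishing the expansion $k(t,s)=-\pi^{-1/2}f(x_0)g(x_0,s)(t-s)^{-1/2}+r(t,s)$ with a remainder regular enough in $t$ for this to work is exactly the hard technical content that the paper supplies through It\^o's parametrix construction and the kernel estimates in the proof of Lemma \ref{l2}; without it your argument is a plausible program rather than a proof. None of this is unfixable, but as written the central step is missing, and the mechanism you propose for it (one plain $t$-derivative) would fail.
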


Following  \cite{COY}, it is quite natural to extend Theorem \ref{t1} when the linear parabolic equation is changed to a semi-linear parabolic equation. To this end, introduce the following semi-linear initial-boundary value problem :
\begin{equation}\label{eqsem1}
\left\{\begin{aligned}
&\partial_tu-\Delta_x u=F(x,t,\sigma(t),u(x,t)),\quad &(x,t)\in Q,\\  &u(x,0)=h(x), &x\in\Omega,\\ &u(x,t)=g(x,t), &(x,t)\in\Sigma 
\end{aligned}\right.
\end{equation}
and consider the following assumptions
\begin{enumerate}
\item[$\rm(H3)$] $h\in \mathcal C^{2,\alpha}(\overline{\Omega})$, $g\in\mathcal C^{2+\alpha,1+\frac{\alpha}{2}}(\overline{\Sigma})$, for some $0<\alpha<1$, and satisfy the compatibility condition
\[\partial_tg(x,0)-\Delta_xh(x)=F(0,x,\sigma(0),h(x)),\quad x\in\Gamma.\]
\item[$\rm(H4)$] $F\in\mathcal C^1(\overline{\Omega}_x\times\R_t\times\R_\sigma\times\R_u)$ is such that $\partial_u F$ and $\partial_\sigma F$ are $\mathcal C^1$, $F$ and $\partial_\sigma F$ are  $\mathcal C^2$ with respect to $x$ and $u$. 
\item[$\rm(H5)$] There exist $M>0$ and  $x_0\in\Gamma$ such that
\[\inf_{t\in[0,T],\sigma\in[-M,M]}|\partial_\sigma F(x_0,t,\sigma,g(x_0,t))|>0.\]
\item[$\rm(H6)$] There exist two non negative constants $c$ and $d$ such that
\[uF(x,t,\sigma(t),u)\leq cu^2+d,\quad t\in[0,T],\ \ x\in\overline{\Omega},\ \ u\in\R.\]
\end{enumerate} 

Under the above mentioned conditions, for any $\sigma\in\mathcal C^1[0,T]$, the initial-boundary value problem \eqref{eqsem1} admits a unique solution $u=u(\sigma )\in\mathcal C^{2+\alpha,1+\frac{\alpha}{2}}(\overline{Q})$ (see Theorem 6.1 in \cite{LSU}) and, given $M>0$, there exists a constant $C>0$ depending only on data (that is $\Omega$, $T$, $F$, $g$ and $h$) such that $\|\sigma\|_{W^{1,\infty}(0,T)}\leq M$ implies
\begin{equation}\label{0}
\|u(\sigma )\|_{C^{2+\alpha ,1+\alpha /2}(\overline{Q})}\leq C.
\end{equation}

We have the following extension of Theorem \ref{t1}.

\begin{Thm}\label{t2} 
Assume that $\rm(H3)$, $\rm(H4)$, $\rm(H5)$ and $\rm(H6)$ are fulfilled. For $i=1,2$, let $\sigma_i\in B(M)$ and $u_i=u(\sigma_i)$.  Then there exists a constant $C>0$, depending only on data, such that
\begin{equation}\label{th2b} 
\norm{\sigma_2-\sigma_1}_{L^\infty(0,T)}\leq C\norm{\partial_t\partial_\nu u_2-\partial_t\partial_\nu u_1}_{L^\infty(\Sigma)}.
\end{equation}
\end{Thm}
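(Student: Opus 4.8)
The plan is to reduce Theorem~\ref{t2} to the linear situation of Theorem~\ref{t1} by linearising the nonlinearity. Set $w=u_2-u_1$ and $\rho=\sigma_2-\sigma_1$. Since $u_1,u_2$ share the initial datum $h$ and the lateral datum $g$, we have $w=0$ on $\Sigma$ and $w(\cdot,0)=0$ in $\Omega$. Subtracting the two equations and applying the fundamental theorem of calculus separately in the $\sigma$ and the $u$ slots, I would write
\[F(x,t,\sigma_2,u_2)-F(x,t,\sigma_1,u_1)=a(x,t)\rho(t)+b(x,t)w(x,t),\]
with $a(x,t)=\int_0^1\partial_\sigma F(x,t,\sigma_1+s\rho,u_2)\,\d s$ and $b(x,t)=\int_0^1\partial_u F(x,t,\sigma_1,u_1+sw)\,\d s$. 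Thus $w$ solves the linear problem $\partial_tw-\Delta_x w-b\,w=a\,\rho$ with homogeneous initial and lateral conditions, which is exactly the structure of the equation satisfied by the difference of solutions in the proof of Theorem~\ref{t1}; the only change is that the specific coefficients $-\sigma_2 f$ and $-f u_1$ are replaced by the general functions $b$ and $a$.

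Next I would record the boundary identity that isolates $\rho$. On $\Sigma$ we have $w=0$, hence $\partial_t w=0$ and the tangential derivatives of $w$ vanish there, so evaluating the equation for $w$ on $\Sigma$ gives $\Delta_x w=-a\rho$ on $\Sigma$, i.e. pointwise $\partial_\nu^2 w+H\partial_\nu w=-a\rho$, where $H$ is the mean curvature of $\Gamma$ and the tangential (Laplace--Beltrami) contribution drops out because $w|_\Sigma=0$. At $x=x_0$, since for $s\in[0,1]$ the argument $\sigma_1+s\rho=(1-s)\sigma_1+s\sigma_2$ stays in $[-M,M]$ and $u_2(x_0,\cdot)=g(x_0,\cdot)$, assumption $\rm(H5)$ forces $\partial_\sigma F(x_0,t,\cdot,g(x_0,t))$ to keep a constant sign on $[-M,M]$, whence $|a(x_0,t)|\geq c_0>0$ uniformly in $t$. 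This is the precise analogue of the lower bound $|f(x_0)g(x_0,t)|>0$ furnished by $\rm(H2)$ in Theorem~\ref{t1}, and it yields $|\rho(t)|\leq C\,|\Delta_x w(x_0,t)|$.

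It then remains to bound $\|\Delta_x w(x_0,\cdot)\|_{L^\infty(0,T)}$ by the measurement, which is exactly the estimate performed in the proof of Theorem~\ref{t1} and which I would invoke here. Because $w(\cdot,0)=0$ on $\overline{\Omega}$ we have $\partial_\nu w(\cdot,0)=0$, so $\partial_\nu w(x,t)=\int_0^t\partial_t\partial_\nu w(x,s)\,\d s$ and therefore $\|\partial_\nu w\|_{L^\infty(\Sigma)}\leq T\,\|\partial_t\partial_\nu u_2-\partial_t\partial_\nu u_1\|_{L^\infty(\Sigma)}$; this controls the first-order term $H\partial_\nu w$, while the control of the second normal derivative $\partial_\nu^2 w(x_0,\cdot)$ comes, as in Theorem~\ref{t1}, from the structure of the source as the product of a known space-time factor $a(x,t)$ and the unknown time coefficient $\rho(t)$. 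To transfer that argument legitimately, the point to verify is that the new coefficients inherit the regularity the proof needs: by $\rm(H4)$ together with the uniform bound \eqref{0} on $u_1,u_2$ in $\mathcal C^{2+\alpha,1+\frac{\alpha}{2}}(\overline{Q})$, the chain rule gives $a,b\in\mathcal C^{\alpha,\frac{\alpha}{2}}(\overline{Q})$ with norms depending only on the data, and $w\in\mathcal C^{2+\alpha,1+\frac{\alpha}{2}}(\overline{Q})$ so that $\Delta_x w|_\Sigma$ and $\partial_t\partial_\nu w$ are defined pointwise. The main obstacle is precisely this transfer: one must check that the derivation of $\partial_\nu^2 w(x_0,\cdot)$ from the first-order data $\partial_t\partial_\nu w$ — the genuine content of Theorem~\ref{t1} — survives the replacement of $-f u_1$ and $-\sigma_2 f$ by the variable but equally regular $a$ and $b$. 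Granting this, \eqref{th2b} follows with a constant depending only on the data.
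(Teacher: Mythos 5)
Your first step---the linearisation $F(x,t,\sigma_2,u_2)-F(x,t,\sigma_1,u_1)=a\rho+bw$ with $a=\int_0^1\partial_\sigma F\,\d s$ and $b=\int_0^1\partial_u F\,\d s$, reducing the problem to a linear equation of the same shape as in Theorem~\ref{t1}, and the lower bound $|a(x_0,t)|\geq c_0>0$ from $\rm(H5)$ by the constant-sign argument---is exactly what the paper does (its $G$ and $q$ are your $a$ and $b$ up to which slot is frozen first). The divergence, and the gap, is in how you extract $\rho(t)$ from the data. You evaluate the PDE on $\Sigma$ to get $\partial_\nu^2 w+H\partial_\nu w=-a\rho$ at $x_0$ and then claim that bounding $\partial_\nu^2 w(x_0,\cdot)$ by $\norm{\partial_t\partial_\nu w}_{L^\infty(\Sigma)}$ is ``exactly the estimate performed in the proof of Theorem~\ref{t1}.'' It is not: the proof of Theorem~\ref{t1} never considers $\Delta_x w$ or the second normal derivative on the boundary. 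The identity $\partial_\nu^2 w+H\partial_\nu w=-a\rho$ is only a restatement of the unknown in terms of second-order boundary data that is \emph{not} measured, and there is no a priori reason (and no argument in your proposal) why $\partial_\nu^2 w(x_0,t)$ should be controlled pointwise by the first-order quantity $\partial_t\partial_\nu w$ on $\Sigma$. The sentence ``the control of $\partial_\nu^2 w(x_0,\cdot)$ comes from the structure of the source as the product $a(x,t)\rho(t)$'' is circular, since that structure is precisely the identity you are trying to exploit. This is the genuine content of the theorem and it is missing.

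What the paper actually does at this stage is quite different: it takes $U$ to be the \emph{Neumann} fundamental solution of $\partial_t-\Delta_x-b$ (constructed following It\^o) and uses the representation
\[
w(x,t)=\int_0^t\!\!\int_\Omega U(x,t;y,s)\rho(s)a(y,s)\,\d y\,\d s+\int_0^t\!\!\int_\Gamma U(x,t;y,s)\,\partial_\nu w(y,s)\,\d\sigma(y)\,\d s .
\]
Setting $x=x_0\in\Gamma$ and using $w|_\Sigma=0$ turns this into an identity between a volume potential with density $\rho\,a$ and a single-layer potential with density $\partial_\nu w$. Differentiating that identity in $t$---which is exactly what Lemmas~\ref{l1} and~\ref{l2} are built to justify and to estimate---produces $\rho(t)a(x_0,t)$ plus a term bounded by $C\int_0^t|\rho(s)|\,\d s$ and a term bounded by $C\norm{\partial_t\partial_\nu w}_{L^\infty(\Sigma)}$ (using $\partial_\nu w(\cdot,0)=0$), and Gronwall's lemma finishes the proof. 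No second-order normal derivative ever appears; the single-layer structure is what converts the first-order measurement into control of $\rho$. To repair your argument you would either have to supply an independent bound of $\partial_\nu^2 w(x_0,\cdot)$ by the measurement (which is essentially a new theorem) or switch to the potential-theoretic route; checking that $a$ and $b$ have the regularity required by Lemmas~\ref{l1} and~\ref{l2} (which your last paragraph correctly identifies via $\rm(H4)$ and \eqref{0}) is the easy part.
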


\begin{rem} Let us observe that we can generalize the results in Theorems \ref{t1} and \ref{t2} as follows:

\smallskip
i) In \eqref{eq1}, we can replace $\sigma (t)f(x)$ by $\sum_{k=1}^p\sigma _k(t)f_k(x)$, where $f_k$, $1\leq k\leq p$, are known. Assume that $(H1)$ is satisfied, with $f=f_k$ for each $k$, where the compatibility condition is changed to 
\[\partial_tg(x,0)-\Delta_xh(x)+\sum_{k=1}^p\sigma_k(0)f_k(x)h(x)=0,\quad x\in\Gamma.\]
Therefore, to each $(\sigma _1,\ldots ,\sigma _p)\in \mathcal{C}[0,T]^p$ corresponds a unique solution $u=u(\sigma _1,\ldots ,\sigma _p)\in C^{2+\alpha ,1+\alpha /2}(\overline{Q})$ and $\max\{ \|\sigma _k\|_{W^{1,\infty}(0,T)};\; 1\leq k\leq p \}\leq M$ implies
\[
\|u(\sigma _1,\ldots ,\sigma _p)\|_{C^{2+\alpha ,1+\alpha /2}(\overline{Q})}\leq C,
\]
for some positive constant $C$ depending only on data.

\smallskip
Following the proof of Theorem \ref{t1}, we prove that, under the following conditions : there exists $x_1,\ldots ,x_p \in \Gamma$ such that the matrix $M(t) = (f_k(x_l)g(x_l,t))$ is invertible  for any $t\in [0,T]$,
\[
\max_{1\leq k\leq p}\|\sigma _k^1-\sigma _k^2\|_{L^\infty (0,T)}\leq C\norm{\partial_t\partial_\nu u_2-\partial_t\partial_\nu u_1}_{L^\infty(\Sigma)},
\]
if $\sigma _k^j\in B(M)$, $1\leq k\leq p$ and $j=1,2$. Here $C$ is a constant that can depend only on data and $u_j=u(\sigma _1^j,\ldots ,\sigma _p^j)$, $j=1,2$.

\smallskip
ii)  We can replace the semi-linear parabolic equation in \eqref{eqsem1} by a semi-linear integro-differential equation. In other words, $F$ can be changed to
\[
F_1(x,t,\sigma (t), u(x,t))+\int_0^tF_2(x,s,\sigma (t-s),u(x,s))ds.
\]
Under appropriate assumptions on $F_1$ and $F_2$, one can establish that Theorem \ref{t2} is still valid in the present case.

\smallskip
ii) Both in \eqref{eq1} and \eqref{eqsem1}, the Laplace operator can be replaced by a second order elliptic operator in divergence form :
\[
E= \nabla \cdot A(x)\nabla +B(x)\cdot \nabla ,
\]
where $A(x)=(a_{ij}(x))$ is a symmetric matrix with coefficients in $C^{1+\alpha}(\overline{\Omega})$, $B(x)=(b_i(x))$ is a vector with components in $C^\alpha (\overline{\Omega})$ and the following ellipticity condition holds
\[A(x)\xi\cdot\xi\geq\lambda\abs{\xi}^2,\quad \xi\in\R^n,\;  x\in\overline{\Omega}.\]
Actually, the normal derivative associated to $E$ is the boundary operator $\partial _{\nu _E}=\nu (x)\cdot A(x)\nabla$.
\end{rem}

To our knowledge, there are only few results concerning the determination of a time-dependent coefficient in an initial-boundary value problem for a parabolic equation from a single measurement. The determination of a source  term of the form $f(t)\chi_D(x)$, where $\chi_D$ the characteristic function of the known  subdomain $D$, was considered by J. R. Canon and S. P. Esteva. They established in \cite{CE86-1} a logarithmic stability estimate in 1D case in a half line when the overdetermined data is the trace at the end point. A similar inverse problem problem in 3D case was studied by these authors in \cite{CE86-2}, where they obtained a Lipschitz stability estimate in weighted spaces of continuous functions. The case of a non local measurement was considered by J. R. Canon and Y. Lin  in \cite{CL88} and \cite{CL90}, where they proved existence and uniqueness for both quasilinear and semi-linear parabolic equations. The determination of a time dependent coefficient in an abstract integrodifferential equation was studied by the first author in \cite{Ch91-1}. He proved existence, uniqueness and Lipschitz stability estimate, extending earlier results by \cite{Ch91-2}, \cite{LS87}, \cite{LS88}, \cite{PO85-1} and \cite{PO85-2}. In \cite{CY06}, the first author and M. Yamamoto obtained a stability result, in a restricted class,  for the inverse problem of determining a source term $f(x,t)$, appearing in a Dirichlet initial-boundary value problem for the heat equation, from Neumann boundary data. In a recent work, the first author and M. Yamamoto \cite{CY11} considered the inverse problem of  finding a control parameter $p(t)$ that reach a desired temperature $h(t)$ along a curve $\gamma (t)$ for a parabolic semi-linear equation with homogeneous Neumann boundary data and they established existence, uniqueness as well as Lipschitz stability. Using geometric optic  solutions, the first author \cite{Ch09} proved uniqueness as well as stability for the inverse problem of determining a general time dependent coefficient of order zero for parabolic equations  from Dirichlet to Neumann map.
In \cite{E07} and \cite{E08}, G. Eskin considered the same inverse problem for hyperbolic and the Schr\"odinger equations with time-dependent electric and magnetic potential and he established uniqueness by gauge invariance.  Recently, R. Salazar \cite{Sa} extended the result of \cite{E07} and obtained a stability result for compactly supported coefficients.

\smallskip
We would like to mention that  the determination of space dependent coefficient $f(x)$, in the source term $\sigma(t)f(x)$, from Neumann boundary data was already considered by the first author and  M. Yamamoto \cite{CY06}.  But, it seems that  our paper is the first work where one treats the determination of a time dependent coefficient, appearing in a parabolic initial-boundary value problem, from Neumann boundary data.

\smallskip
This paper is organized as follows.  In section 2 we come back to the construction of the Neumann fundamental solution by \cite{It} and establish time-differentiability of some potential-type functions, necessary for proving Theorems 1 and 2. Section 3 is devoted to the proof of Theorems 1 and 2.

\section{Time-differentiability of potential-type functions}

In this section, we establish time-differentiability of some potential-type functions, needed in the proof of our stability estimates. In our analysis we follow the construction of the fundamental solution by S. It\^o \cite{It}.

\smallskip
First of all, we recall the definition of fundamental solution associate to the heat equation plus a time-dependent coefficient  of order zero, in the case of Neumann boundary condition. Consider the initial-boundary value problem
\begin{equation}\label{eq2}
\left\{
\begin{aligned}
&\partial_tu =\Delta_x u+q(x,t)u, \quad &(x,t)\in \Omega\times (s,t_0),\\  &\lim_{t\to s}u(x,t)=u_0(x), &x\in\Omega ,\\ &\partial_\nu u(x,t)=0, &(x,t)\in\Gamma\times (s,t_0).
\end{aligned}\right.
\end{equation}
Here $s_0<t_0$ are fixed, $s\in (s_0,t_0)$, $u_0$ and $q(x,t)$ are continuous  respectively in $\overline{\Omega}$ and in $\overline{\Omega}\times[s,t_0]$. Let $U(x,t;y,s)$ be a continuous function in the domain  $s_0<s<t<t_0$, $x\in\overline{\Omega}$, $y\in\overline{\Omega}$. We recall that $U$ is the fundamental solution of \eqref{eq2} if for any $u_0\in\mathcal C(\overline{\Omega})$, 
\[u(x,t)=\int_\Omega U(x,t;y,s)u_0(y)\d y\]
is the solution of \eqref{eq2}. We refer to \cite{It} for the existence and uniqueness of this fundamental solution.

\smallskip
We start with time-differentiability of volume potential-type functions\footnote{Recall that if $\varphi=\varphi (x,t)$ is a continuous function then the corresponding volume potential is given by \[\psi (x,t)=\int_s^t \int_\Omega U(x,t;y,\tau )\varphi (y,\tau )\d y\d \tau .\]}.

\begin{lem}\label{l1}
Fix $s\in(s_0,t_0)$. Let $f\in\mathcal C(\overline{\Omega}\times[s,t_0])$ be $\mathcal C^2$ with respect to $x$, $q\in\mathcal C^1( \overline{\Omega}\times[s,t_0])$ and define, for $(x,t)\in\overline{\Omega}\times(s,t_0)$,
\[f^1(x,t;\tau)=\int_{\Omega}U(x,t;y,\tau)f(y,\tau)\d y,\quad t>\tau>s.\]
Then, $f^1$ admits a derivative with respect to $t$ and 
\begin{equation}\label{l1a}\begin{aligned}\frac{\partial f^1}{\partial t}(x,t;\tau)=&\int_\Omega U(x,t;y,\tau)(\Delta_y+q(x,\tau))f(y,\tau)\d y\\
&+\int_\tau^t\int_\Omega\int_\Omega U(x,t;z,\tau')\partial _tq(z,\tau')U(z,\tau';y,\tau)f(y,\tau)\d z \d y \d \tau'.\end{aligned}\end{equation}
Moreover, $F$ given by
\[F(x,t)=\int_s^t f^1(x,t;\tau)\d \tau,\; (x,t)\in\Gamma \times(s_0,t_0),\]
possesses a derivative with respect to $t$, 
\begin{equation}\label{l1b}\frac{\partial F}{\partial t}(x,t)=f(x,t)+\int_s^t\frac{\partial f^1}{\partial t}(x,t;\tau)\d \tau\end{equation}
and
\begin{equation}\label{l1c}\abs{\int_s^t\frac{\partial f^1}{\partial t}(x,t;\tau)\d \tau}\leq C\int_s^t\norm{f(.,\tau)}_{\mathcal \mathcal C_x^2\left(\overline{\Omega}\right)}\d\tau .\end{equation}
\end{lem}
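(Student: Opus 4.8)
The plan is to avoid differentiating the kernel $U$ directly in $t$. Since $\partial_tU(x,t;y,\tau)$ behaves like $(t-\tau)^{-\frac n2-1}$ near the diagonal, it is not integrable against a merely bounded datum, so one cannot pass $\partial_t$ under the integral defining $f^1$. Instead I would characterise $\partial_tf^1$ as the solution of an auxiliary parabolic problem and represent it through $U$ itself. First I would recall the properties of It\^o's fundamental solution that are needed: the forward equation $\partial_tU=\Delta_xU+q(x,t)U$ in $(x,t)$, the homogeneous Neumann condition $\partial_\nu U=0$ on $\Gamma$, the approximate-identity property $\int_\Omega U(x,t;y,\tau)u_0(y)\,\d y\to u_0(x)$ as $t\to\tau^+$, and the Gaussian bounds from \cite{It}, which in particular yield $\int_\Omega\abs{U(x,t;y,\tau)}\,\d y\leq C$ uniformly. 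By the very definition of $U$, for fixed $\tau$ the map $(x,t)\mapsto f^1(x,t;\tau)$ is the classical solution of $\partial_tf^1=\Delta_xf^1+q(x,t)f^1$ on $\Omega\times(\tau,t_0)$ with Neumann boundary condition and initial value $f(\cdot,\tau)$; in particular $\partial_tf^1$ exists for $t>\tau$ and $\partial_tf^1=(\Delta_x+q(x,t))f^1$.

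Set $W:=\partial_tf^1$. Differentiating in $t$ the equation and the boundary condition satisfied by $f^1$ (legitimate for $t>\tau$ by parabolic regularity, using $q\in\mathcal C^1$), $W$ solves
\[\partial_tW-\Delta_xW-q(x,t)W=\partial_tq(x,t)\,f^1(x,t;\tau),\qquad \partial_\nu W=0 ;\]
letting $t\to\tau^+$ in $\partial_tf^1=(\Delta_x+q)f^1$ and using $f(\cdot,\tau)\in\mathcal C^2(\overline\Omega)$, its initial trace is $W(\cdot,\tau^+)=(\Delta_x+q(\cdot,\tau))f(\cdot,\tau)$. Representing the solution of this inhomogeneous problem through the fundamental solution (homogeneous part from the initial datum plus the Duhamel term from the source) gives
\[W(x,t)=\int_\Omega U(x,t;y,\tau)\big(\Delta_y+q(y,\tau)\big)f(y,\tau)\,\d y+\int_\tau^t\!\int_\Omega U(x,t;z,\tau')\,\partial_tq(z,\tau')\,f^1(z,\tau';\tau)\,\d z\,\d\tau',\]
where the first term is exactly the representation of the datum $(\Delta_x+q(\cdot,\tau))f(\cdot,\tau)$. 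Substituting the definition $f^1(z,\tau';\tau)=\int_\Omega U(z,\tau';y,\tau)f(y,\tau)\,\d y$ in the last integral produces the triple integral and establishes \eqref{l1a}.

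For \eqref{l1b} I would apply the Leibniz rule to $F(x,t)=\int_s^tf^1(x,t;\tau)\,\d\tau$, whose integrand depends on $t$ both through the upper limit and through $f^1(x,t;\tau)$: thus $\partial_tF=f^1(x,t;t^-)+\int_s^t\partial_tf^1(x,t;\tau)\,\d\tau$. The boundary term is $f^1(x,t;t^-)=\lim_{\tau\to t^-}\int_\Omega U(x,t;y,\tau)f(y,\tau)\,\d y=f(x,t)$ by the approximate-identity property, which gives \eqref{l1b}; the differentiation is legitimate because $\tau\mapsto\partial_tf^1(x,t;\tau)$ is integrable up to $\tau=t$, which is the content of the estimate. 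For \eqref{l1c} I would bound the two terms of \eqref{l1a} using $\int_\Omega\abs{U(x,t;y,\tau)}\,\d y\leq C$ and $\int_\Omega\int_\Omega\abs{U(x,t;z,\tau')}\,\abs{U(z,\tau';y,\tau)}\,\d z\,\d y\leq C$, together with $\norm{\partial_tq}_{L^\infty}\leq C$, to obtain $\abs{\partial_tf^1(x,t;\tau)}\leq C\norm{f(\cdot,\tau)}_{\mathcal C^2_x(\overline\Omega)}$ uniformly in $x,t,\tau$; integrating over $\tau\in(s,t)$ yields \eqref{l1c}.

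The main obstacle is the rigorous justification of the representation for $W=\partial_tf^1$. The whole point of \eqref{l1a} is to transfer the problematic $t$-derivative onto the smooth datum $f$ (producing $\Delta_yf$) while generating the history term carrying $\partial_tq$, so that every occurrence of $U$ is undifferentiated and hence integrable. Controlling the limit $t\to\tau^+$ — both the existence and the value of the initial trace $W(\cdot,\tau^+)$ and the validity of the Duhamel formula up to the initial time — is the delicate point, and it is exactly here that the precise Gaussian estimates on $U$ and its derivatives from It\^o's construction are indispensable.
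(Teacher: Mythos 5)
Your proposal follows essentially the same route as the paper: there too one sets $w_1=\partial_t w$, observes that it solves the inhomogeneous Neumann problem with source $-\partial_t q\,w$ and initial value $(\Delta_x+q(\cdot,s))w_0$, and invokes Theorem 9.1 of \cite{It} for the Duhamel representation, which after substituting $f^1=\int_\Omega U f$ yields \eqref{l1a}, with \eqref{l1b} and \eqref{l1c} obtained exactly as you describe. The one point you flag as delicate (the existence and value of the initial trace of $W$ and the validity of the representation for data that are only $\mathcal C^2$) is settled in the paper not by Gaussian estimates on derivatives of $U$ but by first taking $w_0\in\mathcal C^\infty(\overline\Omega)$, where Theorem 9.1 of \cite{It} applies directly, and then passing to the limit along an approximating sequence in $\mathcal C^2(\overline\Omega)$ using only the bound $\int_\Omega\abs{U(x,t;y,s)}\,\d y\leq Ce^{C(t-s)}$.
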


\begin{proof}
We have only to prove \eqref{l1a} and \eqref{l1c}, because \eqref{l1b} follows immediately from \eqref{l1a}.%MTheorem 7.1 of \cite{It}.

\smallskip
Let then $u_0\in\mathcal C^2(\overline{\Omega})$ and consider the function 
\[u(x,t)=\int_\Omega U(x,t;y,s)u_0(y)\d y,\quad   x\in\overline{\Omega},\; s<t<t_0.\]
We show that $u$ admits a derivative with respect to $t$ and
\begin{equation}
\label{l1a1}\begin{aligned}
\partial_tu(x,t) &=\partial_t\left(\int_\Omega U(x,t;y,s)u_0(y)\d y\right)
\\
&=\int_\Omega U(x,t;y,s)(\Delta_y+q(x,s))u_0(y)\d y
\\
&\hskip 1.5 cm -\int_s^t\int_\Omega\int_\Omega U(x,t;z,\tau)q_t(z,\tau)U(z,\tau;y,s)u_0(y)\d z \d y \d \tau.
\end{aligned}
\end{equation}

We need to consider first the case  $u_0=w_0\in\mathcal C^\infty(\overline{\Omega})$. Set
\[
w(x,t)=\int_\Omega U(x,t;z,s)w_0(y)\d y,\quad x\in\overline{\Omega},\; s<t<t_0.
\]
Clearly, $w(x,t)$ is the solution of the following initial-boundary value problem
\[
\left\{
\begin{aligned}
&\partial_tw-\Delta_x w-q(x,t)w=0, \quad &(x,t)\in \Omega\times (s,t_0),\\  
&\lim_{t\to s}w(x,t)=w_0(x), &x\in\Omega,\\ 
&\partial_\nu w(x,t)=0, &(x,t)\in\Gamma\times(s,t_0)
\end{aligned}
\right.
\]
and  $w_1=\partial_tw$ satisfies
\[\left\{
\begin{aligned}
&\partial_tw_1-\Delta_x w_1-q(x,t)w_1=-\partial_tqw, \quad  & (x,t)\in \Omega\times (s,t_0),\\  
&\lim_{t\to s}w_1(x,t)=(\Delta_x+q(x,s))w_0(x), &x\in\Omega,\\ 
&\partial_\nu w_1(x,t)=0, &(x,t)\in\Gamma\times(s,t_0).
\end{aligned}
\right.\]
Therefore, \eqref{l1a1}, with $w$ in place of $u$, is a consequence of Theorem 9.1 of \cite{It}. 

\smallskip
Next, let $(w_0^n)_n$ be a sequence in $\mathcal C^\infty(\overline{\Omega})$  converging to $u_0$  in $C^2(\overline{\Omega})$
and $v(x,t)$ given by
\[\begin{aligned}v(x,t)=&\int_\Omega U(x,t;y,s)(\Delta_x+q(x,s))u_0(y)\d y\\
\ &-\int_s^t\int_\Omega\int_\Omega U(x,t;z,\tau)\partial_tq(z,\tau)U(z,\tau;y,s)u_0(y)\d z \d y \d \tau.\end{aligned}\]
Consider $(w_n)_n$, the sequence of functions, defined by
\[w_n(x,t)=\int_\Omega U(x,t;z,s)w^n_0(y)\d y.\]
We proved that, for any $n\in\mathbb N$, 
\begin{equation}\label{l1a3}
\begin{aligned}
\partial_tw_n(x,t)
&=\int_\Omega U(x,t;y,s)(\Delta_y+q(x,s))w^n_0(y)\d y
\\
&\hskip 1.5 cm -\int_s^t\int_\Omega\int_\Omega U(x,t;z,\tau)\partial_tq(z,\tau)U(z,\tau;y,s)w^n_0(y)\d z \d y \d \tau.
\end{aligned}
\end{equation}
From the proof of Theorem 7.1 of \cite{It}, 
\begin{equation}\label{l1a4}\int_\Omega\abs{U(x,t;y,s)}\d y\leq Ce^{C(t-s)},\; (x,t)\in\overline{\Omega}\times(s,t_0).\end{equation}
Therefore, we can pass to the limit, as $n$ goes to infinity, in  \eqref{l1a3}. We deduce that $\partial _tw_n$ converges to $v$ in $\mathcal C(\overline{\Omega}\times[s,t_0])$. But, $w_n$ converges to $u$ in $\mathcal C(\overline{\Omega}\times[s,t_0])$. Hence $u$ admits a derivative with respect to $t$ and $\partial_tu=v$. That is we proved \eqref{l1a1} and  consequently \eqref{l1a} holds  true. Finally,  we note that \eqref{l1c} is deduced easily from \eqref{l1a4}.
\end{proof}

Next, we consider time-differentiability a single layer potential-type function\footnote{The single-layer potential corresponding to a continuous function $\varphi=\varphi (x,t)$ is   given by \[\psi (x,t)=\int_s^t \int_\Gamma U(x,t;y,\tau )\varphi (y,\tau )\d \sigma (y)\d \tau .\]}.

\begin{lem}\label{l2}
Fix $s\in(s_0,t_0)$. Let $f\in\mathcal C(\Gamma \times [s,t_0])$ be $\mathcal C^1$ with respect to $t\in[s,t_0]$ with $f(x,s)=0$. Define, for $(x,t)\in \Gamma\times (s,t_0)$,
\[f^1(x,t;\tau)=\int_{\Gamma}U(x,t;y,\tau)f(y,\tau)\d \sigma(y),\quad t>\tau>s.\]
Then\[F(x,t)=\int_s^t f^1(x,t;\tau)\d \tau\]
is well defined, admits a derivative with respect to $t$ and we have
\begin{equation}\label{l2a}\norm{\frac{\partial F}{\partial t}}_{L^\infty(\Gamma \times (s,t_0))}\leq C\norm{\partial_tf}_{L^\infty(\Gamma \times (s,t_0) )}.\end{equation}
\end{lem}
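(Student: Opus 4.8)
The plan is to reduce everything to integrability properties of the boundary restriction of the fundamental solution. First I would record the single-layer analogue of the basic bound \eqref{l1a4}: from It\^o's Gaussian estimates for the Neumann fundamental solution one has, for $x\in\Gamma$ and $s<\tau<t<t_0$, a bound of the form $\int_\Gamma \abs{U(x,t;y,\tau)}\,\d\sigma(y)\leq C(t-\tau)^{-1/2}$, the loss being only $1/2$ (rather than $n/2$) because $\Gamma$ is $(n-1)$-dimensional. Since $1/2<1$, the map $\tau\mapsto f^1(x,t;\tau)$ is integrable on $(s,t)$, so $F$ is well defined, and every subsequent estimate is reduced to controlling $\tau$-integrals of boundary integrals of $U$ and its derivatives.

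Next I would use the hypothesis $f(\cdot,s)=0$ to regularise the kernel before differentiating. Writing $f(y,\tau)=\int_s^\tau\partial_rf(y,r)\,\d r$ and applying Fubini on the simplex $s<r<\tau<t$ gives
\[ F(x,t)=\int_s^t\int_\Gamma P(x,t;y,r)\,\partial_rf(y,r)\,\d\sigma(y)\,\d r,\qquad P(x,t;y,r):=\int_r^tU(x,t;y,\tau)\,\d\tau. \]
The kernel $P$ is bounded, since $\int_\Gamma\abs{P}\,\d\sigma(y)\leq C(t-r)^{1/2}$, and, crucially, $P(x,t;y,t)=0$, so on differentiating in $t$ the boundary contribution from the upper limit $r=t$ vanishes. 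Freezing the diagonal singularity via the substitution $\tau=t-\eta$ one computes $\partial_tP(x,t;y,r)=U(x,t;y,r)+\int_r^t(\partial_t+\partial_\tau)U(x,t;y,\tau)\,\d\tau$, and hence
\[ \frac{\partial F}{\partial t}(x,t)=\int_s^t\int_\Gamma U(x,t;y,r)\,\partial_rf\,\d\sigma(y)\,\d r+\int_s^t\int_\Gamma\Big(\int_r^t(\partial_t+\partial_\tau)U\,\d\tau\Big)\partial_rf\,\d\sigma(y)\,\d r. \]

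The first term is immediately controlled by $C\norm{\partial_tf}_{L^\infty(\Gamma\times(s,t_0))}$ through the $(t-r)^{-1/2}$ bound, which is precisely \eqref{l2a}. The entire difficulty is thus concentrated in the second term, and the main obstacle is to prove
\[ \int_r^t\int_\Gamma\abs{(\partial_t+\partial_\tau)U(x,t;y,\tau)}\,\d\sigma(y)\,\d\tau\leq C,\qquad\text{uniformly in }r\in[s,t). \]
Here one cannot estimate $\partial_tU$ and $\partial_\tau U$ separately, since each has boundary integral of order $(t-\tau)^{-3/2}$, which is not integrable in $\tau$; the point is that the symmetric combination $\partial_t+\partial_\tau$ kills the leading singularity. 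Concretely, as $U$ solves the forward equation in $(x,t)$ and the adjoint equation in $(y,\tau)$, one has $(\partial_t+\partial_\tau)U=(\Delta_x-\Delta_y)U+(q(x,t)-q(y,\tau))U$. The zero-order term is harmless, because the $\mathcal C^1$ regularity of $q$ yields a factor $\abs{x-y}+(t-\tau)$ that upgrades the singularity to an integrable one. The genuine work is to show that the principal parts of $\Delta_xU$ and $\Delta_yU$ cancel, thanks to the near-diagonal symmetry of It\^o's parametrix, leaving a remainder of order $(t-\tau)^{-\beta}$ with $\beta<1$. Extracting this cancellation from the fine structure of the fundamental solution in \cite{It} is the crux of the argument; once it is established, differentiation under the integral sign is justified by dominated convergence and \eqref{l2a} follows.
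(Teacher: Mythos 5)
Your overall strategy --- integrate by parts in time using $f(\cdot,s)=0$ to put the kernel $P(x,t;y,r)=\int_r^tU(x,t;y,\tau)\,\d\tau$ in front of $\partial_rf$, then differentiate $P$ --- is a genuinely different route from the paper, which never differentiates $U$ in $\tau$ at all: it splits $U=H+H\ast K$ via It\^o's explicit construction, moves the $t$-dependence onto $f$ and onto the $\mathcal C^1$-in-time pieces of the composed kernel by changes of variables, and only then differentiates. The first half of your argument (the bound $\int_\Gamma\abs{U}\,\d\sigma(y)\leq C(t-\tau)^{-1/2}$, well-definedness of $F$, the Fubini step, and the control of the term $\int_s^t\int_\Gamma U(x,t;y,r)\partial_rf\,\d\sigma\,\d r$) is sound and consistent with what the paper proves for $H$.

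The problem is that the entire content of the lemma is concentrated in the estimate $\int_r^t\int_\Gamma\abs{(\partial_t+\partial_\tau)U(x,t;y,\tau)}\,\d\sigma(y)\,\d\tau\leq C$, and you do not prove it: you name it as ``the crux'' and assert that the cancellation ``once it is established'' yields the result. That is a genuine gap, not a routine verification. Worse, the heuristic you offer does not suffice as stated. Writing $(\partial_t+\partial_\tau)U=(\Delta_x-\Delta_y)U+(q(x,t)-q(y,\tau))U$ and invoking first-order cancellation of the principal parts (the coefficient difference contributing a factor $\abs{x-y}+(t-\tau)$ against a second derivative of size $(t-\tau)^{-n/2-1}$) leaves a remainder of pointwise order $(t-\tau)^{-(n+1)/2}e^{-c\abs{x-y}^2/(t-\tau)}$, whose integral over the $(n-1)$-dimensional surface $\Gamma$ is of order $(t-\tau)^{-1}$ --- exactly the borderline, log-divergent case, i.e.\ $\beta=1$ rather than $\beta<1$. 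To get $\beta<1$ one must exploit more: the $\mathcal C^2$ regularity of the metric coefficients, the reflection structure of the parametrix at the boundary and the identities \eqref{met1}--\eqref{met3}, and the smoothing effect of the composition $H\ast K$. This is precisely the bookkeeping the paper carries out in local coordinates for $F_2$, and it is also where the hypothesis that $x$ itself lies on $\Gamma$ enters. Until that estimate is actually established (or located in \cite{It}, where it does not appear in this form), the proof is incomplete at its decisive step.
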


Contrary to Lemma \ref{l1}, for Lemma \ref{l2} we cannot use directly the general properties of the fundamental solutions developed in \cite{It}.  We need to come back to the construction of the fundamental solution of \eqref{eq2} introduced by \cite{It}. First, consider the heat equation $\partial_tu =\Delta_xu$ in the half space $\Omega_1=\{x=(x_1,\ldots,x_n);\; x_1>0\}$ in $\R^n$ with the boundary condition
$\partial_{x_1}u=0$ on $\Gamma_1= \{x=(0,x_2,\ldots,x_n);\; (x_2,\ldots,x_n)\in\R^{n-1}\}$. For any $y=(y_1,y_2,\ldots,y_n)$, we define $\overline{y}$ by $\overline{y}=(-y_1,y_2,\ldots,y_n)$. Let 
\[G(x,t)=\frac{1}{(4\pi t)^{\frac{n}{2}}}e^{-\frac{\abs{x}^2}{t}}\]
denotes the Gaussian kernel and set 
\[G_1(x,t;y)=G(x-y,t)+G(x-\overline{y},t).\]
Then, the fundamental solution $U_0(x,t;y,s)$ of
\begin{equation}\label{eq3}
\left\{\begin{aligned}& \partial_tu=\Delta_x u, \quad &(x,t)\in \Omega_1\times (s,t_0),\\  &\lim_{t\to s}u(x,t)=u_0(x),&x\in\Omega_1,\\ &\partial_\nu u(x,t)=0,&(x,t)\in\Gamma_1\times (s,t_0)\end{aligned}\right.\end{equation}
is given by
\[U_0(x,t;y,s)=G_1(x,t-s;y).\]
In order to construct the fundamental solution in the case of an arbitrary domain $\Omega$, Itô introduced the following local coordinate system around each point $z\in\Gamma$.
\begin{lem}\label{l3}
\emph{(Lemma 6.1 and its corollary, Chapter 6 of \cite{It})} For every point $z\in\Gamma$, there exist a coordinate neighborhood $W_z$ of $z$ and a coordinate system $(x^*_1,\ldots,x^*_n)$ satisfying the following conditions:\\
1) the coordinate transformation between the coordinate system $(x^*_1,\ldots,x^*_n)$ and the original coordinate system in $W_z$ is of class $C^2$ and the partial derivatives of the second order of the transformation functions are H\"older continuous ;\\
2) $\Gamma\cap W_z$ is represented by the equation $x^*_1=0$ and $\Omega\cap W_z$  is represented by $x^*_1>0$ ;\\
3) let $\mathcal L$ be the diffeomorphism from $W_z$ to $\mathcal L(W_z)$ defined by
\[\begin{array}{rccl} \mathcal L: & W_z& \to & \mathcal L(W_z) \\
 \ \\ & x & \mapsto &  (x^*_1(x),\ldots,x^*_n(x)).\end{array}\]
Then, for any $u\in\mathcal C^1(\overline{\Omega})$ we have 
\[\partial_\nu u(\xi)=-\partial_{x_1}(u\circ\mathcal L^{-1})(x),\quad\xi\in\Gamma\cap W_z\ \ \textrm{and }\ x=\mathcal L(\xi).\]
\end{lem}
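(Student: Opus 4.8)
The statement is the standard construction of boundary normal coordinates, and my plan is to build them explicitly and then read off the three properties. Fix $z\in\Gamma$. Since $\Omega$ is $C^3$, after a rigid motion carrying $z$ to the origin and the outward unit normal $\nu(z)$ to $-e_1$, I can represent $\Gamma$ near $z$ as a graph: there is a $C^3$ function $\phi$ on a neighborhood of $0$ in $\R^{n-1}$ with $\phi(0)=0$ and $\nabla\phi(0)=0$ such that, locally, $\Gamma=\{(\phi(x'),x'):x'\in\R^{n-1}\}$ and $\Omega=\{x_1>\phi(x')\}$. Writing $\gamma(x')=(\phi(x'),x')$ for this parametrization of $\Gamma$ and $N(x')=-\nu(\gamma(x'))$ for the \emph{inward} unit normal, I note that $N\in C^2$ because the unit normal of a $C^3$ hypersurface is $C^2$.

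The key idea is to take as first coordinate the arclength travelled along the inward normal, so I set
\[\Psi(x_1^*,x')=\gamma(x')+x_1^*\,N(x'),\qquad (x_1^*,x')\ \text{near}\ 0,\ x_1^*\geq0.\]
Differentiating, $\partial_{x_1^*}\Psi\equiv N(x')$ and, at the origin, $\partial_{x_1^*}\Psi(0)=N(0)=e_1$ while $\partial_{x_j}\Psi(0)=\partial_{x_j}\gamma(0)=e_j$ for $2\leq j\leq n$ (here $\nabla\phi(0)=0$ is used). Hence $D\Psi(0)=I$ is invertible, and the inverse function theorem in the $C^2$ category produces a neighborhood $W_z$ of $z$ on which $\Psi$ is a $C^2$-diffeomorphism onto its image with $C^2$ inverse; the Hölder continuity of the second-order derivatives is inherited from that of the second derivatives of $\phi$ and $N$, which is the one place where the full boundary regularity is spent. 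Setting $\mathcal L=\Psi^{-1}$ and $(x_1^*,\ldots,x_n^*)=\mathcal L$ gives the coordinate system of property 1.

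Property 2 is immediate: boundary points are exactly those with $x_1^*=0$, and since $N$ points into $\Omega$, the points with small $x_1^*>0$ lie in $\Omega$; after shrinking $W_z$ this yields $\Gamma\cap W_z=\{x_1^*=0\}$ and $\Omega\cap W_z=\{x_1^*>0\}$. For property 3, fix $\xi\in\Gamma\cap W_z$, write $x=\mathcal L(\xi)=(0,x')$, and let $u\in C^1(\overline\Omega)$. Because $\partial_{x_1^*}\Psi\equiv N$ is a \emph{unit} vector, the chain rule gives
\[\partial_{x_1}(u\circ\mathcal L^{-1})(x)=\partial_{x_1^*}(u\circ\Psi)(0,x')=\nabla u(\xi)\cdot N(x')=-\nabla u(\xi)\cdot\nu(\xi)=-\partial_\nu u(\xi),\]
which is exactly the claimed identity $\partial_\nu u(\xi)=-\partial_{x_1}(u\circ\mathcal L^{-1})(x)$.

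The only genuinely non-routine decision is the one in the second paragraph: choosing the first coordinate to be the signed normal distance rather than performing a plain graph flattening $x\mapsto(x_1-\phi(x'),x')$. This is what forces the first column of $D\Psi$ to be the unit inward normal \emph{all along} the boundary patch, so that $\partial_{x_1^*}$ reproduces the normal derivative for every $u$; a graph flattening would only match the normal direction at the single point $z$. I expect the main obstacle to be the bookkeeping of regularity—verifying that $\Psi$ and $\mathcal L$ are $C^2$ with Hölder second derivatives—since the inward normal $N$ is one derivative less smooth than $\Gamma$; everything else reduces to the inverse function theorem and the one-line chain-rule computation above.
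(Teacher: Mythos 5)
The paper does not actually prove this lemma: it is imported verbatim from It\^o (Lemma 6.1 and its corollary, Chapter 6 of \cite{It}), so the only meaningful comparison is with It\^o's construction, which is the same boundary-normal-coordinate idea you use. Your treatment of properties 2) and 3) is correct, and your observation that a plain graph flattening $x\mapsto (x_1-\phi(x'),x')$ would only reproduce the normal derivative at the single point $z$ --- so that the first coordinate must be the distance along the inward normal --- is exactly the point of the construction.

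There is, however, one genuine gap, precisely at the place you wave at: the H\"older continuity of the second-order derivatives in property 1). With $\partial\Omega$ of class $\mathcal C^3$, the graph function $\phi$ is $\mathcal C^3$, hence the inward unit normal $N$ (which depends on $\nabla\phi$) is only $\mathcal C^2$, and its second derivatives involve \emph{third} derivatives of $\phi$, which are continuous but not H\"older continuous in general. Now the pure tangential second derivatives of your map are
\[
\partial_{x_i'}\partial_{x_j'}\Psi(x_1^*,x')=\partial_{x_i'}\partial_{x_j'}\gamma(x')+x_1^*\,\partial_{x_i'}\partial_{x_j'}N(x'),
\]
and the second term is $x_1^*$ times a merely continuous function of $x'$; it is therefore not H\"older continuous on the collar, so the claim that H\"older continuity ``is inherited'' is not substantiated. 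As written, your construction yields a $\mathcal C^2$ transformation with H\"older second derivatives only if $\partial\Omega\in \mathcal C^{3,\beta}$ for some $\beta>0$ (or if one inserts an extra device, e.g.\ replacing $N(x')$ off the boundary by a mollification at scale $x_1^*$, which preserves $\partial_{x_1^*}\Psi(0,x')=N(x')$ and hence property 3 while restoring Lipschitz second derivatives). Since the H\"older clause is part of the statement and feeds into It\^o's parametrix estimates used later in the paper, this step needs to be either proved under the stated hypothesis or traced back to It\^o's actual regularity assumptions; everything else in your argument is sound.
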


From now on, for any $z\in\Gamma$,  we view coordinate system $(x^*_1,\ldots,x^*_n)$ as a rectangular coordinate system. Moreover, using the local coordinate system of Lemma \ref{l3}, for any $y=(y_1,y_2,\ldots,y_n)\in \mathcal L(W_z)$, we define $\overline{y}=(-y_1,y_2,\ldots,y_n)$ and, without loss of generality, we  assume that, for any $y\in \mathcal L(W_z)$, we have $\overline{y}\in \mathcal L(W_z)$. For any interior point $z$ of $\Omega$, we fix an arbitrary local coordinate system and a coordinate neighborhood $W_z$ contained in $\Omega$. For any $z\in\overline{\Omega}$ and $\delta>0$, we set $W(z,\delta)=\{x:\ \abs{x-z}^2<\delta\}$ and $\delta_z>0$ such that, for any $z\in\overline{\Omega}$ we have $\overline{W(z,\delta_z)}\subset W_z$.

\smallskip
Recall the following partition of unity lemma.
 \begin{lem}\label{l4}\emph{(Lemma 7.1, Chapter 7 of \cite{It})} There exist a finite subset $\{ z_1,\ldots,z_m\}$ of $\overline{\Omega}$ and a finite sequence of functions $\{\omega_1,\ldots,\omega_m\}$ with the following properties:\\
 1) $\textrm{supp}\; \omega_l\subset W(z_l,\delta_{z_l})$, $l=1,\ldots,m$, and each $\omega_l$ is of class $\mathcal C^3$ with respect to the local coordinates in $W_{z_l}$  ;\\
 2) $\{\omega_l(x)^2;\; l=1,\ldots,m\}$ forms a partition of unity in $\overline{\Omega}$ ;\\
 3)$\partial_\nu\omega_l(\xi)=0,$ $l=1,\ldots,m$, $\xi\in\Gamma$.
\end{lem}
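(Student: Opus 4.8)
The plan is to reduce the statement to an ordinary finite covering of $\overline\Omega$, then to manufacture non-negative pre-bumps $\psi_l$ carrying the support and boundary information, and finally to pass to $\omega_l=\psi_l/\sqrt{\sum_k\psi_k^2}$. Normalising by the square root of the sum of squares is exactly what turns a covering into a partition of unity of the \emph{squares}, and, as explained below, it also preserves the vanishing of the normal derivative, so that all three properties can be verified at once.

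First I would set up the covering. For each $z\in\overline\Omega$ I pass to the local coordinates $(x^*_1,\ldots,x^*_n)$ of \lemref{l3} and choose a radius $r_z>0$ so small that the closed local-coordinate ball $\{|x^*-\mathcal L(z)|\leq r_z\}$ is carried by $\mathcal L^{-1}$ into $W(z,\delta_z)$; this is possible since $\mathcal L^{-1}$ is continuous and sends $\mathcal L(z)$ to $z$. The images under $\mathcal L^{-1}$ of the half-radius balls $\{|x^*-\mathcal L(z)|<r_z/2\}$ are relatively open neighbourhoods of the points $z$, hence cover the compact set $\overline\Omega$; I extract a finite subcover with centres $z_1,\ldots,z_m$ and write $r_l=r_{z_l}$.

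Next I fix a profile $\rho\in\mathcal C^\infty[0,\infty)$ with $\rho>0$ on $[0,1)$ and $\rho\equiv0$ on $[1,\infty)$, and I define, in the local coordinates of the chart at $z_l$, $\psi_l(x^*)=\rho\big(r_l^{-2}|x^*-\mathcal L(z_l)|^2\big)$, extended by $0$ outside the chart. Then $\psi_l$ is smooth in the local coordinates, $\mathrm{supp}\,\psi_l\subset W(z_l,\delta_{z_l})$ by the choice of $r_l$, and $\psi_l>0$ on the half-radius ball used in the covering. The decisive point is the \emph{reflection trick}: for a boundary centre $\mathcal L(z_l)$ has vanishing first coordinate, so $|x^*-\mathcal L(z_l)|^2$ depends on $x^*_1$ only through $(x^*_1)^2$ and $\psi_l$ is an even function of $x^*_1$; hence $\partial_{x^*_1}\psi_l=0$ on $\{x^*_1=0\}=\Gamma\cap W_{z_l}$, and the normal-derivative formula of \lemref{l3} gives $\partial_\nu\psi_l=0$ on $\Gamma$. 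For an interior centre the chart lies inside $\Omega$, so $\psi_l$ vanishes near $\Gamma$ and the same conclusion is trivial.

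Since the half-radius balls cover $\overline\Omega$, the function $S^2:=\sum_{k=1}^m\psi_k^2$ is continuous, strictly positive, hence bounded below by a positive constant on the compact set $\overline\Omega$; I set $\omega_l=\psi_l/S$. Property 2) is immediate, $\sum_l\omega_l^2=S^{-2}\sum_l\psi_l^2=1$, and property 1) holds since $\mathrm{supp}\,\omega_l=\mathrm{supp}\,\psi_l$. For 3) I differentiate $\omega_l=\psi_l S^{-1}$ along $\nu$ on $\Gamma$: using $\partial_\nu S=(2S)^{-1}\sum_k2\psi_k\,\partial_\nu\psi_k$ together with $\partial_\nu\psi_k=0$ on $\Gamma$ for every $k$, both $\partial_\nu\psi_l$ and $\partial_\nu S$ vanish on $\Gamma$, so $\partial_\nu\omega_l=0$ there. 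The hard part, and the only genuinely delicate step, is the $\mathcal C^3$ regularity in 1): on $\mathrm{supp}\,\omega_l$ one must control $S^{-1}$ in the coordinates of the single chart $W_{z_l}$, which forces the competing $\psi_k$ ($k\ne l$) to be read in those coordinates through the transition maps between charts. Here I would use that $\Omega$ is of class $\mathcal C^3$, so the straightening charts of \lemref{l3} and their transition maps can be taken $\mathcal C^3$; then each $\psi_k$ is $\mathcal C^3$ in the coordinates of chart $l$, $S^2$ is $\mathcal C^3$ and bounded below, and therefore $\omega_l=\psi_l S^{-1}$ is $\mathcal C^3$ in the local coordinates of $W_{z_l}$, which completes the construction.
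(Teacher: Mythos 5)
Your construction is the expected one for this lemma, and it is worth noting that the paper itself contains no proof to compare against: the statement is quoted verbatim from It\^o (Lemma 7.1, Chapter 7 of \cite{It}), so the benchmark is It\^o's construction, which indeed proceeds as you do --- a finite cover by coordinate balls, non-negative bumps $\psi_l$ that are even in $x_1^*$ for boundary charts, and the normalization $\omega_l=\psi_l/\sqrt{\sum_k\psi_k^2}$. Your verifications of 2) and 3) are sound: the sum-of-squares normalization gives $\sum_l\omega_l^2=1$ directly, and the reflection trick combined with the formula $\partial_\nu u(\xi)=-\partial_{x_1}(u\circ\mathcal L^{-1})(x)$ of Lemma \ref{l3} gives $\partial_\nu\psi_k=0$ on all of $\Gamma$ for every $k$ (evenness in $x_1^*$ inside the chart, identical vanishing outside), whence $\partial_\nu S=0$ and $\partial_\nu\omega_l=0$.

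The genuine gap is in the step you yourself flag as the hard part: the $\mathcal C^3$ regularity in 1). You dispose of it by asserting that, since $\Omega$ is $\mathcal C^3$, ``the straightening charts of Lemma \ref{l3} and their transition maps can be taken $\mathcal C^3$.'' This contradicts Lemma \ref{l3} as stated, which only provides transformations of class $\mathcal C^2$ with H\"older-continuous second derivatives, and the loss is not a removable artifact: It\^o's boundary charts are normal (collar) coordinates, built from the unit normal field of the $\mathcal C^3$ hypersurface $\Gamma$, which is only $\mathcal C^2$; moreover these specific charts are forced by the reflection identities \eqref{met1}--\eqref{met3}, which the paper uses crucially in the proof of Lemma \ref{l2}, so you cannot trade them for generic $\mathcal C^3$ straightening charts. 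With only $\mathcal C^2$-with-H\"older transitions, a bump $\psi_k$ read in the coordinates of chart $l$ is only $\mathcal C^2$ there, and your $\omega_l=\psi_l/S$ falls short of the claimed $\mathcal C^3$. What actually saves the construction is a structural fact your argument does not use: because $x_1^*$ can be taken to be the distance to $\Gamma$ and $x'=(x_2^*,\ldots,x_n^*)$ constant along normals, the transition between two boundary charts is triangular, of the form $(x_1^*,x')\mapsto(x_1^*,\tau(x'))$ with $\tau$ a transition map of the $\mathcal C^3$ atlas of the boundary manifold $\Gamma$ (the merely $\mathcal C^2$ projection onto $\Gamma$ cancels out), hence $\mathcal C^3$ even though each ambient chart map is only $\mathcal C^2$; and the interior charts must then be chosen compatibly (original coordinates for bumps supported away from the collar, collar coordinates for those meeting it, refining the cover so that no bump is read through an incompatible chart). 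As written, your appeal to $\mathcal C^3$ transition maps is unjustified and property 1) is not established.
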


Let $\{ z_1,\ldots,z_m\}$  be the finite subset of $\overline{\Omega}$, introduced in the previous lemma. For any $k\in\{1,\ldots,m\}$, let $\mathcal L_k$  denotes the diffeomorphism from $W_{z_k}$ to $\mathcal L_k(W_{z_k})$ defined by
\[\begin{array}{rccl} \mathcal L_k: & W_{z_k}& \to & \mathcal L_k(W_{z_k}) \\
 \ \\ & x & \mapsto &  (x^*_1(x),\ldots,x^*_n(x)),\end{array}\]
where $(x^*_1,\ldots,x^*_n)$ is the local coordinate system of Lemma \ref{l3} defined in $W_{z_k}$. For any $k\in\{1,\ldots,m\}$, the differential operator
\[\partial_t-\Delta_x-q(x,t)\]
becomes, in terms of local coordinate  system $x^*=(x^*_1,\ldots,x^*_n)$,
\[L^k_{t,x^*}=\partial_t-\frac{1}{\sqrt{a_k(x^*)}}\sum_{i,j=1}^n\partial_{x^*_i}\left(\sqrt{a_k(x^*)}a^{ij}_k(x^*)\partial_{x^*_j}\cdot\right)-q_k(x^*,t)\]
in $ \mathcal L_k(W_{z_k})\times(s_0,t_0)$. Here  $q_k(x^*,t)$ is H\"older continuous on $\mathcal L_k(W_{z_k})\times(s_0,t_0)$ and $(a^{ij}(x^*))$ is the contravariant tensor of degree 2  defined by
\[\left(a^{ij}_k(x^*)\right)=\left(J_{\mathcal L_k}(\mathcal L_k^{-1}(x^*))\right)^{T}\left(J_{\mathcal L_k}(\mathcal L_k^{-1}(x^*))\right),\]
with
\[J_{\mathcal L_k}(x)=\left(\frac{\partial x^*_j(x)}{\partial x_i}\right).\]

According to the construction of \cite{It}  given in Chapter 6  (see pages 42 to 45 of \cite{It}), $\left(a^{ij}_k(x^*)\right)$ is of class $\mathcal C^2$ in $\mathcal L_k(\overline{\Omega}\cap W_{z_k})$ and it is  a positive definite symmetric matrix at every point $x^*\in \mathcal L_k(W_{z_k})$. We set $(a^k_{ij}(x^*))=(a^{ij}_k(x^*))^{-1}$ and $a_k(x^*)=\rm{det}(a^k_{ij}(x^*))$. Consider the volume element $\d x^*=\sqrt{a_k(x^*)}\d x^*_1\ldots\d x^*_n$ on $\mathcal L_k(W_{z_k})$ and $\d x'=\sqrt{a(0,x')}\d x^*_2\ldots\d x^*_n$ on $\mathcal L_k(W_{z_k}\cap\Gamma)$ with $x'=(x^*_2,\ldots,x^*_n)$. Note that, by the construction of S. It\^o \cite{It} (see page 45), for any $k=1,\ldots,m$, we have
\begin{equation}\label{met1}a_k^{ij}(\mathcal L_k(x))=a_k^{ij}(\overline{\mathcal L_k(x)}),\quad x\in\overline{\Omega}\cap W_{z_k},\ \ \textrm{for }i=j=1\textrm{ or }i,j=2,\ldots,n,\end{equation}
\begin{equation}\label{met2}a_k^{1j}(\mathcal L_k(x))=a_k^{j1}(\mathcal L_k(x))=-a_k^{1j}(\overline{\mathcal L_k(x)}),\quad x\in\overline{\Omega}\cap W_{z_k},\ \ \textrm{for }j=1,\ldots,n\end{equation}
and
\begin{equation}\label{met3}a_k^{1j}(\mathcal L_k(\xi))=a_k^{j1}(\mathcal L_k(\xi))=\delta_{j1},\quad\xi\in\Gamma\cap W_{z_k},\ \ j=1,\ldots,n,\end{equation}
where $\delta_{j1}$ denotes the kronecker's symbol.
For any $k\in\{1,\ldots,m\}$, let $G_k(x,t;y)$ be defined, in the region
\[D_k=\{(x,t,y);\;  x,\ y\in \mathcal L_k(W_{z_k}),\; 0<t<t_0-s\},\]
by
\[G_k(x,t;y)=\frac{1}{(4\pi t)^{\frac{n}{2}}}e^{-\sum_{i,j=1}^n\frac{a^k_{ij}(y)(x_i-y_i)(x_j-y_j)}{4t}}.\]

Next , define $H_{z_k}(x,t;y)=G_k(\mathcal L_k(x),t;\mathcal L_k(y))$, for $k\in\{1,\ldots,m\}$ and $z_k\in\Omega$ ;   $H_{z_k}(x,t;y)=G_k(\mathcal L_k(x),t;\mathcal L_k(y))+G_k(\mathcal L_k(x),t;\overline{\mathcal L_k(y)})$, for $k\in\{1,\ldots,m\}$,  $z_k\in\Gamma$, $x\in W_{z_k}$ and $y\in W_{z_k}$ ; $H_{z_k}(x,t;y)=0$ if $x\notin W_{z_k}$ or $y\notin W_{z_k}$. Consider also  $H(x,t;y)$, defined in the region
\[D=\{(x,t,y);\;  x\in\overline{\Omega},\ y\in\overline{\Omega},\; 0<t<t_0-s\},\]
as follows
\[H(x,t;y)=\sum_{l=1}^m\omega_l(x)H_{z_l}(x,t;y)\omega_l(y).\]
As in  Lemma 7.2 of \cite{It}, we define successively:
\[J_0(x,t;y,s)=(\partial_t-\Delta_x-q(x,t))(H(x,t-s;y)),\]
\[J_k(x,t;y,s)=\int_s^t\int_\Omega J_0(x,t;z,\tau)J_{k-1}(z,\tau;y,s)\d z \d\tau,\]
\[K(x,t;y,s)=\sum_{k=0}^{+\infty}J_k(x,t;y,s).\]
Then, following  \cite{It} (see page 53), the fundamental solution of \eqref{eq2} is given by
\begin{equation}\label{it}U(x,t;s,y)=H(x,t-s;y)+\int_s^t\int_\Omega H(x,t-\tau;z)K(z,\tau;y,s)\d z \d\tau.\end{equation}

We are now able to prove Lemma \ref{l2} with the help of representation \eqref{it}, the properties of $H(x,t;y)$ and $K(x,t;y,s)$.

\smallskip
{\it Proof of Lemma \ref{l2}.} Without loss of generality, we assume that $s=0$. Set 
\[F_1(x,t)=\int_0^t\int_{\Gamma}H(x,t-s;y)f(y,s)\d\sigma(y) \d s,\]
\[F_2(x,t)=\int_0^t\int_{\Gamma}\int_s^t\int_\Omega H(x,t-\tau ;z)K(z,\tau;y,s)f(y,s)\d z \d\tau \d\sigma(y) \d s.\]
According to representation \eqref{it}, one needs to show that $F_1$ and $F_2$ admit a derivative with respect to $t$ and 
\begin{equation}\label{l2a}\abs{\partial_tF_1(x,t)}+\abs{\partial_tF_2(x,t)}\leq C\norm{\partial_t f}_{L^\infty(\Gamma \times (0,t_0))}\end{equation}
for $(x,t)\in\Gamma\times(0,t_0)$.
We start by considering $F_1$. Applying a simple substitution, we obtain
\begin{equation}\label{l2b}F_1(x,t)=\int_0^t\int_{\Gamma}H(x,s;y)f(y,t-s)\d\sigma(y) \d s.\end{equation}

Next, for $x\in\Gamma$, there exist $l_1,\ldots,l_r\subset\{1,\ldots,m\}$ such that $x\in \textrm{supp}\, \omega_l$ for $l\in\{l_1,\ldots,l_r\}$ and $x\notin \textrm{supp} \, \omega_l$ for $l\notin\{l_1,\ldots,l_r\}$. Moreover, since $x\in\Gamma$, we have $z_{l_1},\ldots,z_{l_r}\in\Gamma$. Then, from the construction of $H(x,t;y)$, we obtain
\[\begin{aligned}\int_{\Gamma}H(x,s;y)\d\sigma(y)&=\int_{\Gamma}\sum_{k=1}^r\omega_{l_k}(x)H_{z_{l_k}}(x,s;y)\omega_{l_k}(y)\d\sigma(y)\\
\ &=2\sum_{k=1}^r\int_{\R^{n-1}}\chi_{l_k}(0,x')\frac{1}{(4\pi s)^{\frac{n}{2}}}e^{-\sum_{i,j=1}^n\frac{a^{l_k}_{ij}(0,y')(x'_i-y'_i)(x'_j-y'_j)}{4s}}\chi_{l_k}(0,y')\sqrt{a_{l_k}(0,y')}\d y'\end{aligned}\]
with, for $l\in\{1,\ldots,m\}$, $\chi_l\in\mathcal C_0^3\left(\mathcal L_l(\textrm{supp}\, \omega_l)\right)$ such that $\chi_l(x)=\omega_l(\mathcal L_l^{-1}(x))$ and with $(x_1',\ldots, x_n')=(0,x')$, $(y_1',\ldots, y_n')=(0,y')$.
 Using the substitution $y'\rightarrow z'=\frac{x'-y'}{\sqrt{s}}$, we derive
\begin{equation}\label{l2c}\begin{aligned}&\int_{\Gamma}H(x,s;y)\d\sigma(y)\\
&\leq C\sum_{k=1}^r\int_{\R^{n-1}}\chi_{l_k}(0,x')\frac{1}{\sqrt{s}}e^{-\sum_{i,j=1}^n a^{l_k}_{ij}(0,x'-\sqrt{s}z')z'_i z'_j}\chi_{l_k}(0,x'-\sqrt{s}z')\sqrt{a_{l_k}(0,x'-\sqrt{s}z')}\d z'.\end{aligned}\end{equation}
Therefore,
\[\int_{\Gamma}\abs{H(x,s;y)}\d\sigma(y)\leq \frac{C}{\sqrt{s}}\int_{\R^{n-1}}e^{-a_0\abs{z'}^2}dz'\leq \frac{C}{\sqrt{s}},\]
where $a_0>0$ is a constant.
From this estimate, we deduce that
 \[\int_{\Gamma}\abs{H(x,s;y)f(y,t-s)}\d\sigma(y)\leq C\frac{\norm{f}_{L^\infty(\Gamma\times (0,t_0))}}{\sqrt{s}}\]
and
\[\abs{\partial_t\left(\int_{\Gamma}H(x,s;y)f(y,t-s)\d\sigma(y)\right)}\leq C\frac{\norm{\partial_t f}_{L^\infty(\Gamma\times (0,t_0))}}{\sqrt{s}}.\]
Thus, $F_1$ admits a derivative with respect to $t$,
\[\partial_t F_1(x,t)=\int_{\Gamma}H(x,t;y)f(y,0)\d\sigma(y)+\int_0^t\int_{\Gamma}H(x,s;y)\partial_tf(y,t-s)\d\sigma(y)\d s\]
and, since $f(y,0)=0$ for $y\in\Gamma$, we obtain
\begin{equation}\label{l2dd}\abs{\partial_tF_1(x,t)}\leq C\norm{\partial_tf}_{L^\infty(\Gamma\times (0,t_0))},\quad (x,t)\in\Gamma\times (0,t_0).\end{equation}

Let us  now consider $F_2$. We want to show that $\partial _tF_2$ exists and the following estimate holds:
\begin{equation}\label{l2g}\abs{\partial_tF_2(x,t)}\leq C\norm{\partial_tf}_{L^\infty(\Gamma\times (0,t_0) )},\quad (x,t)\in\Gamma\times (0,t_0).\end{equation}
For this purpose, using the local coordinate system, it suffices to prove 
\begin{equation}\label{l2gg}\abs{\partial_tF_2(\mathcal L_l^{-1}(0,x'),t)}\leq C_l\norm{\partial_tf}_{L^\infty((\Gamma\times (0,t_0) )},\quad ((0,x'),t)\in \mathcal L_l(\Gamma\cap W_{z_l})\times(0,t_0),\ \ l\in\{1,\ldots,m\}.\end{equation}
From now on we set  $x=\mathcal L_l^{-1}(0,x')$ with $(0,x')\in\mathcal L_l(\Gamma\cap W_{z_l})\subset\{0\}\times\R^{n-1}$ and we will show \eqref{l2gg}.
First, note that
\[\begin{aligned}J_0(z,\tau;s,y)=&(\partial_\tau-\Delta_z-q(z,t))H(z,\tau-s;y)\\
=&\sum_{l=1}^m\omega_l\left(\mathcal L_l^{-1}(z^*)\right)L^l_{\tau,z^*}H_{z_l}(\mathcal L_l^{-1}(z^*),\tau -s;y)\omega_l(y)\\
\ &+\sum_{l=1}^m[L^l_{\tau,z^*},\omega_l(\mathcal L_l^{-1}(z^*))]H_{z_l}(\mathcal L_l^{-1}(z^*),\tau -s;y)\omega_l(y).\end{aligned}\]
According to the results in Chapter 4 of \cite{It} (pages 26 and 27), using the local coordinate system, we obtain
\[\begin{aligned}L^l_{\tau,z^*}H_{z_l}(\mathcal L_l^{-1}(z^*),\tau -s;\mathcal L_l^{-1}(y^*))=&\sum_{i,j=1}^n(a_l^{ij}(z^*)-a_l^{ij}(y^*))\frac{\partial^2 H_{z_l}}{\partial_{z^*_i}\partial_{z^*_j}}(\mathcal L_l^{-1}(z^*),\tau -s;\mathcal L_l^{-1}(y^*))\\
\ &+[B_l(z^*,y^*,\partial_{z^*})+q_l(z^*,t)]H_{z_l}(\mathcal L_l^{-1}(z^*),\tau -s;\mathcal L_l^{-1}(y^*)),\end{aligned}\]
where $B_l(z^*,y^*,\partial_{z^*})$ is a differential operator of order $\leq 1$ in $z^*$ with continuous coefficients in $z^*,y^*\in\mathcal L_l(\textrm{supp}\, \omega_l)$. In view of the results in Chapter 4 of \cite{It} (see pages 26 and 27), combining \eqref{met1}, \eqref{met2}, \eqref{met3} and \eqref{l2c}, applying the substitution $y''=\frac{z'-y'}{\sqrt{\tau-s}}$, with $z^*=(z_1^*,z')$ and $y^*=(0,y')$,  we obtain
\[
\begin{aligned}
\int_{\Gamma}J_0(\mathcal L_l^{-1}(z^*),\tau;y,s)\d\sigma(y)=\sum_{j=0}^2P_j\left(\frac{z_1^*}{\sqrt{\tau-s}}\right) e^{-\frac{(z_1^*)^2}{\tau-s}}&\left[\int_{\R^{n-1}}\frac{J_0^j\left(z^*,\tau;y'',s;\tau-s\right)}{(\tau-s)^{\frac{j}{2}}}\d y''\right],
\end{aligned}
\]
for $0<s<\tau<t_0$ and $z^*\in \mathcal L_l(\Omega\cap W_{z_k})$, where, for $j=0,1,2$, $P_j$ are polynomials and  $J_0^j$ are continuous functions, $\mathcal C^1$ with respect to $\tau,\ s\in(0,t_0)$ and  satisfy
\[\max_{\substack{ i=0,1,2\\ \alpha_1+\alpha_2\leq1}}\int_{\R^{n-1}}\abs{\partial_\tau^{\alpha_1}\partial_s^{\alpha_2}J_0^j\left(z^*,\tau;y'',s;v_1\right)}\d y''\leq C_l,\quad 0<s<\tau<t_0,\ \ z_1^\ast >0,\ \ 0<\ v_1<t_0,\]
for some constant  $C_l>0$. We note that $\partial_{v_1}J_0^j\left((z_1',z''),\tau;y'',s; v_1\right)$ is not necessarily bounded. Indeed, we  show 
\[\abs{\partial_{v_1}J_0^j\left((z_1',z''),\tau;y'',s; v_1\right)}\leq \frac{C_l}{\sqrt{v_1}},\quad0<\ v_1<t_0,\ \  j=0,1,2.\]
This representation  and the construction of $K(z,\tau;y,s)$  in Chapter 5 of \cite{It} (see pages 31 to 32 for the construction in $\R^n$ and page 53 for the construction in a bounded domain) lead
\begin{equation}\label{l2d}
\int_{\Gamma}K(\mathcal L_l^{-1}(z^*),\tau;y,s)\d\sigma(y)= \sum_{j=0}^2 Q_j\left(\frac{z_1^*}{\sqrt{\tau-s}}\right)e^{-\frac{(z_1^*)^2}{\tau-s}}\left[\int_{\R^{n-1}}\frac{K_j\left(z^*,\tau;y'',s;\tau-s\right)}{(\tau-s)^{\frac{j}{2}}}\d y''\right],
\end{equation}
for $0<s<\tau<t_0$ and $z^*\in \mathcal L_l(\Omega\cap W_{z_k})$, where, for $j=0,1,2$, $Q_j$ are polynomials and  $K_j$ are continuous functions, $\mathcal C^1$ with respect to $\tau,\ s\in(0,t_0)$ and  satisfy
\[\
\max_{\substack{ i=0,1,2\\ \alpha_1+\alpha_2\leq1}}\int_{\R^{n-1}}\abs{\partial_\tau^{\alpha_1}\partial_s^{\alpha_2}K_j\left(z^*,\tau;y'',s;v_1\right)}\d y''\leq C_l,\quad 0<s<\tau<t_0,\ \ z_1^\ast >0,\ \ 0<\ v_1<t_0, 
\] 
where $C_l>0$ is a constant. Furthermore,  using representation \eqref{l2d}, we have, for $s<\tau<t<t_0$,  

\[\begin{aligned}&\int_\Omega H(x,t-\tau;z)\int_{\Gamma}K(\tau ,z;y,s)f(y,s)\d\sigma(y)\d z\\ &= \sum_{j=0}^2\sum_{l=1}^{m}\int_{\R_+^n}\omega_l(x)H_{z_l}(x, t-\tau ;\mathcal L_l^{-1}(z^*))\chi_l(z^*) Q_j\left(\frac{z_1^*}{\sqrt{\tau-s}}\right)e^{-\frac{(z_1^*)^2}{\tau-s}}\left[\int_{\R^{n-1}}\frac{K_j\left(z^*,\tau;y'',s;\tau-s\right)}{(\tau-s)^{\frac{j}{2}}}\d y''\right]\d z^* \end{aligned}\]
with $\R_+^n=\{(z_1^*,\ldots,z_n^*)\in\R^n;\; z_1^*>0\}$.
Then,  applying the substitutions $z''=\frac{x'-z'}{\sqrt{t-\tau}}$ and $z_1'=\frac{z_1^*}{\sqrt{\tau-s}}$, we deduce, in view of the form of the functions $K_j$,  the following
\begin{equation}\label{l2d1}
\begin{aligned}
&\int_\Omega H(x,t-\tau;z)\int_{\Gamma}K(z,\tau;y,s)\d\sigma(y)\d z\\ &=\sum_{j=0}^1\int_{\R_+^n}\frac{H_l'(x',t-\tau ;(z_1',z''), \tau-s)}{\sqrt{t-\tau}}\left[\int_{\R^{n-1}}\frac{K_j'\left((z_1',z''),\tau;y'',s;\tau-s\right)}{(\tau-s)^{\frac{j}{2}}}\d y''\right]\d z''\d z_1',\quad s<\tau<t<t_0,
\end{aligned}
\end{equation}
 for some continuous functions $K'_0$, $K'_1$ and $H_l'$ such that $K'_0$, $K'_1$ are $\mathcal C^1$, with respect to $s$ and $\tau$, and the following estimates hold:
\begin{equation}\label{rl1}
\int_{\R_+^n}\abs{H_l'(x',t-\tau ;(z_1',z''),\tau-s)} \d z''\leq C_l,\quad 0<s<\tau<t<t_0,
\end{equation}
\begin{equation}\label{rl2}
\max_{\substack{ j=0,1\\ \alpha_1+\alpha_2\leq1}}\int_{\R^{n-1}}\abs{\partial_\tau^{\alpha_1}\partial_s^{\alpha_2}K_j'\left((z_1',z''),\tau;y'',s; v_1\right)}\d y''\leq C_l,\quad 0<s<\tau<t_0,\ \ 0<\ v_1<t_0,
\end{equation}
for some constant  $C_l>0$.
  Repeating the arguments used for \eqref{l2d1} and applying some results of page 31 of \cite{It}, we obtain, for  $0<t<t_0$,
\begin{equation}\label{l2d2}\begin{aligned}\int_0^t\int_s^t\int_\Omega \abs{H(x,t-\tau ;z)}\int_{\Gamma}\abs{K(z,\tau;y,s)}\d\sigma(y) \d z  \d \tau  \d s&\leq C_l\int_0^t\int_s^t\left[\sum_{j=0}^1\frac{1}{\sqrt{t-\tau}}\cdot\frac{1}{(\tau-s)^{\frac{j}{2}}}\right]\d\tau \d s\\
\ &\leq C_l\sum_{j=0}^1\int_0^t(t-s)^{1-\frac{j}{2}}\d s\leq C_l.\end{aligned}\end{equation}
This estimate  and Fubini's theorem imply
\[F_2(x,t)=\int_0^t\int_s^t\int_\Omega H(x,t-\tau ;z)\int_{\Gamma}K(z,\tau;y,s)f(y,s)\d\sigma(y)\d z \d\tau \d s.\]
Then, in view of representation \eqref{l2d1}, for all $0<t<t_0$,   
\[
\begin{aligned}
F_2(x,t)=\int_0^t&\int_s^t\sum_{j=0}^1\int_{\R_+^n}\frac{H_l'(x',t-\tau ;(z_1',z''),\tau-s)}{\sqrt{t-\tau}}\\
\ &\times\left[\int_{\R^{n-1}}\frac{K_j'\left((z_1',z''),\tau ;y'',s;\tau-s\right)}{(\tau-s)^{\frac{j}{2}}}f_1(x',s;y'',z'')\d y''\right]\d z'' \d z_1' \d\tau \d s, 
\end{aligned}
\]
where $f_1(x',s;y'',z'')=f\left(\mathcal L_l^{-1}(0,x'-(\sqrt{t-s}) z''-(\sqrt{\tau-s}) y''),s\right)$. Making the substitution $\tau'=t-\tau$, we obtain
\[\begin{aligned}F_2(t,x)=\int_0^t&\int_0^{t-s}\sum_{j=0}^1\int_{\R_+^n}\frac{H_l'(x',\tau';(z_1',z''),t-s-\tau')}{\sqrt{\tau'}}\\
\ &\times\left[\int_{\R^{n-1}}\frac{K_j'\left((z_1',z''),t-\tau';y'',s;t-s-\tau'\right)}{(t-s-\tau')^{\frac{j}{2}}}f_1(x',s;y'',z'')\d y''\right]\d z''\d z_1'\d\tau'\d s.\end{aligned}\]
Then,  the substitution $s'=t-s$ yields
\[\begin{aligned}F_2(x,t)=\int_0^t&\int_0^{s'}\sum_{j=0}^1\int_{\R_+^n}\frac{H_l'(x',\tau';(z_1',z''),s'-\tau')}{\sqrt{\tau'}}\\
\ &\times\int_{\R^{n-1}}\frac{K_j'\left((z_1',z''),t-\tau';y'',t-s';s'-\tau'\right)}{(s'-\tau')^{\frac{j}{2}}}f_1(x',t-s';y'',z'')\d y''\d z_1'\d z''\d\tau'\d s'.\end{aligned}\]
But, for $0<\tau'<s'<t<t_0$, estimates \eqref{rl1}, \eqref{rl2} and  $f(y,0)=0$, $y\in\Gamma$, imply
\begin{equation}\label{rl3}
\begin{aligned}
&\Big| \sum_{j=0}^1\int_{\R_+^n}\frac{H_l'(x',\tau',x';(z_1',z''),s'-\tau')}{\sqrt{\tau'}}
\int_{\R^{n-1}}\frac{K_j'\left((z_1',z''),t-\tau';y'',t-s';s'-\tau'\right)}{(s'-\tau')^{\frac{j}{2}}}
\\
&\hskip 2 cm \times f_1(x',t-s';y'',z'')\d y'' \d z'' \d z_1'\Big|
\leq C_l\sum_{j=0}^1\frac{\norm{\partial_tf}_{L^\infty(\Gamma\times (0,t_0))}}{\sqrt{\tau'}} \frac{1}{(s'-\tau ')^{\frac{j}{2}}}
\end{aligned}
\end{equation}
and
\begin{equation}\label{rl4}\begin{aligned}&\Big|\partial_t\Big(\sum_{j=0}^1\int_{\R_+^n}\frac{H_l'(x',\tau ';(z_1',z''),s'-\tau')}{\sqrt{\tau'}}
\int_{\R^{n-1}}\frac{K_j'\left((z_1',z''),t-\tau ';y'',t-s';s'-\tau'\right)}{(s'-\tau ')^{\frac{j}{2}}}\\
 &\hskip 2 cm \times f_1(x',t-s';y'',z'')\d y'' \d z_1'\d z'' \Big)\Big|
\leq C_l\sum_{j=0}^1\frac{\norm{\partial_tf}_{L^\infty(\Gamma \times (0,t_0))}}{\sqrt{\tau '}}\frac{1}{(s'-\tau')^{\frac{j}{2}}}.\end{aligned}
\end{equation}
From estimates \eqref{rl3}, \eqref{rl4} and $f(y,0)=0$, $y\in\Gamma$, we conclude that $F_2$ admits a derivative with respect to $t$ 
and
\[
\begin{aligned}
\partial_tF_2(x,t)=\int_0^t\int_0^{s'}&\partial_t\Big( \sum_{j=0}^1\int_{\R_+^n}\frac{H_l'(x',\tau' ;(z_1',z''),s'-\tau')}{\sqrt{\tau'}}
\int_{\R^{n-1}}\frac{K_j' ((z_1',z''),t-\tau';y'',t-s';s'-\tau')}{(s'-\tau')^{\frac{j}{2}}}\\
&\times f_1(x',t-s';y'';z'')\d y'' \d z_1'\d z''\Big)\d\tau' \d s'.
\end{aligned}
\]
Moreover, \eqref{l2d2} and  \eqref{rl4} imply \eqref{l2gg} and \eqref{l2g}. Finally,  we obtain \eqref{l2a} from \eqref{l2dd} and \eqref{l2g}. This completes the proof.\qed

\section{Proof of Theorems 1 and 2}

{\it Proof of Theorem \ref{t1}.} Let $u=u_1-u_2$ and $\sigma =\sigma_2-\sigma_1$. Then $u$ is the solution of the following initial-boundary value problem
 
\begin{equation}
\label{eq3}\left\{
\begin{aligned}
&\partial_tu-\Delta_x u+\sigma_2(t)f(x)u=\sigma (t)f(x)u_1(x,t),\quad &(x,t)\in Q,
\\  
&u(x,0)=0, &x\in\Omega,
\\ 
&u(x,t)=0, &(x,t)\in\Sigma.
\end{aligned}
\right.
\end{equation}

 Let $U(x,t;y,s)$ be the fundamental solution of \eqref{eq2} with $q(x,t)=-\sigma_2(t)f(x)$. Applying Theorem 9.1 of \cite{It}, we obtain 
\begin{equation}\label{th4}
u(x,t)=\int_0^t\int_\Omega U(x,t;y,s)\sigma (s)f(y)u_1(y,s)\d y \d s +\int_0^t\int_\Gamma U(x,t;y,s)\partial_\nu u(y,s)\d\sigma( y )\d s.
\end{equation}
Now, since $u(x,t)=0$, $(t,x)\in\Sigma$ and $x\in\Gamma$, 
\begin{equation}\label{th5}\int_0^{t}\int_\Omega U(x,t;y,s)\sigma(s)f(y)u_1(y,s)\d y \d s =-\int_0^{t}\int_\Gamma U(x,t;y,s)\partial_\nu u(y,s)\d\sigma( y )\d s.\end{equation}

In view of differentiability properties in Lemma \ref{l1} and \ref{l2}, we can take the $t$-derivative of both sides of identity \eqref{th4}. We find

\[\begin{aligned}f(x)g(x,t)\sigma(t) =&-\int_0^{t}\partial_t\left(\int_\Omega U(x,t;y,s)\sigma(s)f(y)u_1(y,s)\d y\right) \d s\\ \ &-\partial_t\left(\int_0^{t}\int_\Gamma U(x,t;y,s)(\partial_\nu u_2(y,s)-\partial_\nu u_1(y,s))\d\sigma( y )\d s\right)\end{aligned}\]
and, for $x=x_0$, condition (H2)  implies
\begin{equation}\label{f1}
\begin{aligned}
\sigma (t)&=h(t)  \int_0^{t}\partial_t \left( \int_\Omega U(x_0,t;y,s)\sigma(s)f(y)u_1(y,s)\d y \right) \d s  
\\
&\hskip 3cm +h(t)\partial_t \left( \int_0^{t}\int_\Gamma U(x_0,t;y,s)\partial_\nu u(y,s)\d\sigma( y )\d s \right),
\end{aligned}
\end{equation}
 where $h(t)=-1/(g(t,x_0)f(x_0))$.

\smallskip 
Since $u(x,0)=0$, $x\in\Omega$, we have $\partial_\nu u(x,0)=0$, $x\in\Gamma$. Thus, the estimates in Lemma \ref{l1} and \ref{l2} lead
\begin{equation}\label{f2}\abs{\int_0^{t}\partial_t\left(\int_\Omega U(x_0,t;y,s)\sigma (s)f(y)u_1(y,s)\d y\right) \d s}\leq C\int_0^t\abs{\sigma (s)}\d s,\end{equation}
\begin{equation}\label{f3}\abs{\partial_t\left(\int_0^{t}\int_\Gamma U(x,t;y,s)\partial_\nu u(y,s)\d\sigma( y )\d s\right)}\leq C\norm{\partial_t\partial_\nu u}_{L^\infty(\Sigma)}.\end{equation}

Therefore, representation \eqref{f1}  and estimates \eqref{0}, \eqref{f2}, \eqref{f3} imply
 \[\abs{\sigma(t)}\leq\int_0^tC\abs{\sigma(s)}\d s+ C\norm{\partial_t\partial_\nu u}_{L^\infty(\Sigma)}.\]
Here and henceforth, $C>0$ is a generic constant depending only on data. Hence, Gronwall's lemma yields
\[\abs{\sigma(t)}\leq C\norm{\partial_t\partial_\nu u}_{L^\infty(\Sigma)}e^{Ct}\leq Ce^{CT}\norm{\partial_t\partial_\nu u}_{L^\infty(\Sigma)},\quad t\in(0,T).\]
Then \eqref{th1} follows and the proof is complete.
\qed

\smallskip
{\it Proof of Theorem 2.} Set $u=u_1-u_2=u(\sigma_1)-u(\sigma_2)$. Then, according to (H4) and (H6), $u$ is the solution of the following initial-boundary value problem
\begin{equation}\label{eqsem2}
\left\{
\begin{aligned}
&\partial_tu-\Delta_x u-q(x,t)u=F\left(t,x,\sigma_1(t),u_2(x,t)\right)-F\left(t,x,\sigma_2(t),u_2(x,t)\right),\quad &(x,t)\in Q,
\\  
&u(x,0) =0, &x\in\Omega,
\\ 
&u(t,x)=0, &(t,x)\in\Sigma ,
\end{aligned}
\right.
\end{equation}
with 
\begin{equation}\label{sem1}q(x,t)=\int_0^1\partial_uF\left[t,x,\sigma_1(t),u_2(x,t)+\tau(u_1(x,t)-u_2(x,t))\right]\d\tau .\end{equation}
Note that assumptions (H4) and (H6) imply that $q\in\mathcal C^1(\overline{Q})$.

\smallskip
On the other hand, in view of (H4), 
\[F\left(t,x,\sigma_1(t),u_2(x,t)\right)-F\left(t,x,\sigma_2(t),u_2(x,t)\right)=(\sigma_1(t)-\sigma_2(t))G(x,t),\]
with
\[G(x,t)=\int_0^1\partial_\sigma F\left(t,x,\sigma_2(t)+s(\sigma_1(t)-\sigma_2(t)),u_2(x,t)\right)\d s.\]
Using this representation, we deduce that $u$ is the solution of
\begin{equation}\label{eqsem3}
\left\{\begin{aligned}
&\partial_tu-\Delta_x u-q(x,t)u=(\sigma_1(t)-\sigma_2(t))G(x,t), \quad &(x,t)\in Q,
\\  
&u(x,0)=0, &x\in\Omega,
\\ 
&u(x,t)=0, &(t,x)\in\Sigma .
\end{aligned}\right.
\end{equation}
Let us remark that (H4) and (H6) imply that $G\in\mathcal C^{2,1}(\overline{Q})$. Let $U(t,x;s,y)$ be the fundamental solution of \eqref{eq2} with $q(x,t)$ defined by \eqref{sem1}. Then, according to Theorem 9.1 of \cite{It}, for $\sigma(t)=\sigma_1(t)-\sigma_2(t)$, we have the representation
\[u(x,t)=\int_0^t\int_\Omega U(x,t;y,s)\sigma(s)G(y,s)\d y \d s+\int_0^t\int_\Gamma U(x,t;y,s)\partial_\nu u(y,s)\d\sigma( y )\d s.\]
Since $u(x,t)=0$, $(t,x)\in\Sigma$, we obtain
\begin{equation}\label{sem2}\int_0^{t}\int_\Omega U(x_0,t;y,s)\sigma(s)G(y,s)\d y \d s =-\int_0^{t}\int_\Gamma U(x_0,t;y,s)\partial_\nu u(y,s)\d\sigma( y )\d s,\end{equation}
with $x_0$ defined in assumption (H5). Combining Lemma 1 and Lemma 2 with some arguments used in the proof of Theorem 1, we prove that $f_1$ and $f_2$ defined respectively by
\[f_1(t)=\int_0^{t}\int_\Omega U(x_0,t;y,s)\sigma(s)G(y,s)\d y d s,\]
\[f_2(t)=\int_0^{t}\int_\Gamma U(x_0,t;y,s)\partial_\nu u(y,s)\d\sigma( y )\d s,\]
admit a derivative with respect to $t$ and 
\[f_1'(t)=\sigma(t)G(x_0,t)+\int_0^{t}\partial_t\left(\int_\Omega U(x_0,t;y,s)\sigma(s)G(y,s)\d y \right)\d s,\]
\[f_2'(t)=\int_0^{t}\partial_t\left(\int_\Gamma U(x_0,t;y,s)\partial_\nu u(y,s)\d\sigma( y )\right)\d s,\]
\begin{align}\label{sem3}
&\abs{\int_0^{t}\partial_t\left(\int_\Omega U(x_0,t;y,s)\sigma(s)G(y,s)\d y \right)\d s}\nonumber
\\
& \hskip 4 cm \leq C\int_0^t\abs{\sigma(s)}\norm{G(\cdot,s)}_{\mathcal C^2_x(\overline{\Omega})}\d s\leq C \int_0^t\abs{\sigma(s)}\d s
\end{align}
and
\begin{equation}\label{sem4}
\abs{\int_0^{t}\partial_t\left(\int_\Gamma U(x_0,t;y,s)\partial_\nu u(y,s)\d\sigma( y )\right)\d s}
\leq C\norm{\partial_t\partial_\nu u}_{L^\infty(\Sigma)}.
\end{equation}
Here and in the sequel $C>0$ is a generic constant that can depend only on data.

\smallskip
Taking the $t$-derivative of both sides of identity \eqref{sem2}, we obtain
\begin{align*}
\sigma(t)G(x_0,t)&=-\int_0^{t}\partial_t\left(\int_\Omega U(x_0,t;y,s)\sigma(s)G(y,s)\d y \right)\d s
\\
&\hskip 3 cm -\int_0^{t}\partial_t\left(\int_\Gamma U(x_0,t;y,s)\partial_\nu u(y,s)\d\sigma( y )\right)\d s.
\end{align*}
Let us observe that (H5) and $\max (\norm{\sigma_1}_\infty, \norm{\sigma_2}_\infty)\leq M$ imply
\begin{align}\label{sem5} 
|G(x_0,t)|&=\int_0^1|\partial_\sigma F\left(t,x_0,\sigma_2(t)+s(\sigma_1(t)-\sigma_2(t)),g(x_0,t)\right)|\d s \nonumber
\\
&\geq \inf_{t\in[0,T],\sigma\in[-M,M]}|\partial_\sigma F(t,x_0,\sigma,g(x_0,t))|>0.
\end{align}
Then, 
\begin{align*}
\sigma(t)&=H(t)\int_0^{t}\partial_t\left(\int_\Omega U(x_0,t;y,s)\sigma(s)G(y,s)\d y \right)\d s
\\
&\hskip 2cm +H(t)\int_0^{t}\partial_t\left(\int_\Gamma U(x_0,t;y,s)\partial_\nu u(y,s)\d\sigma( y )\right)\d s ,
\end{align*}
where $H(t)=-1/G(x_0,t)$. Hence, \eqref{sem3}, \eqref{sem4} and \eqref{sem5} imply
\[\abs{\sigma(t)}\leq\int_0^tC\abs{\sigma(s)}\d s+ C\norm{\partial_t\partial_\nu u}_{L^\infty(\Sigma)}.\]
We complete the proof of Theorem \ref{t2} by applying Gronwall's lemma.
\qed

\end{document}